\documentclass[a4paper,11pt]{article}
\usepackage[top=3.0cm, bottom=3.0cm, inner=3.0cm, outer=3.0cm, includefoot]{geometry}

\usepackage[utf8]{inputenc}
\usepackage[T1]{fontenc}
\usepackage{authblk}

\usepackage{verbatim}
\usepackage{multirow}
\usepackage{multicol}

\usepackage{amssymb}
\usepackage{amsmath}
\usepackage{mathtools,bbm}
\usepackage{graphicx,tikz}
\usepackage{amsthm}
\usepackage{caption,subcaption} 

\usepackage{color}
\usepackage{enumerate}

\usepackage{cancel}

\setlength{\parindent}{0mm}
\setlength{\parskip}{2mm }

\newcommand{\grad}{\operatorname{grad}}

\newcommand{\K}{{\mathcal{K}}}

\newcommand{\eChar}{\begin{enumerate}[(i)]}
\newcommand{\eCharR}{\begin{enumerate}[(a)]}
\newcommand{\eBr}{\begin{enumerate}[(1)]}

\newcommand{\Abstract}

\title
{A brief note about $p$-curvature on graphs

}

\author{Chunyang Hu}
\affil{School of Mathematical Sciences, University of Science and Technology of China \\ Hefei 230023, China}
\affil{chunyanghu@mail.ustc.edu.cn}

\date{\today}

\date{\today}

\theoremstyle{plain}
\newtheorem{lemma}{Lemma}[section]

\newtheorem{proposition}[lemma]{Proposition}

\theoremstyle{definition}

\newtheorem{definition}[lemma]{Definition}
\newtheorem{remark}[lemma]{Remark}

\numberwithin{equation}{section}

\begin{document}

\maketitle

\pagestyle{plain}

\begin{abstract}
  In this paper, we consider Wang's $CD_p(m,\K)$ condition on graphs, which depends on the $p$-Laplacian $\Delta_p$ for $p>1$ and is an extension of the classical Bakry-\'Emery $CD(m,\K)$  curvature dimension condition. We calculate several examples including paths, cycles and star graphs, and we show that the $p$-curvature is non-negative at some vertices in the case $p\geq 2$, while it approaches to $-\infty$ in the case of $1<p<2$. In addition, we 
  observe that a crucial property of $\Gamma_2$ on Cartesian products does no longer hold for $\Gamma_2^p$ in the case of $p > 2$. As a consequence, an analogous proof that non-negative curvature is preserved under taking Cartesian products is not possible for $p > 2$.
\end{abstract}

\tableofcontents

\section{Introduction}
In this paper, we present some results on the $p$-Bakry-\'Emery curvature on graphs, introduced by Wang \cite{Wang23}.


The original idea behind Bakry-\'Emery curvature comes from 
Bochner's formula, which is the following fundamental pointwise identity in a Riemannian manifold:
$$ \frac{1}{2} \Delta \vert {\rm{grad}} f(x) \vert^2 = \langle {\rm{grad}}\, \Delta f(x), {\rm{grad}}\, f(x) \rangle + \Vert {\rm{Hess}} f(x) \Vert_{HS}^2 + {\rm{Ric}}({\rm{grad}}f(x)). $$
This formula implies the following inequality between the dimension $m$ of the manifold and a lower bound $\mathcal{K}$ of its Ricci curvature at $x \in M$:
$$ \frac{1}{2} \Delta \vert {\rm{grad}} f(x) \vert^2 - \langle {\rm{grad}}\, \Delta f(x), {\rm{grad}}\, f(x) \rangle \ge \frac{1}{m} (\Delta f(x))^2 + \mathcal{K} \vert {\rm{grad}}f(x) \vert^2, $$
which can be reformulated with the help of $\Gamma$-operators in the following way:
\begin{equation} \label{eq:CDmK} 
\Gamma_2 f(x) \ge \frac{1}{m} (\Delta f(x))^2 + \mathcal{K} \Gamma f(x).  
\end{equation}
Here, the $\Gamma$-operators are defined iteratively as follows:
\begin{align*}
\Gamma(f,g)(x) &= \frac{1}{2} \left( \Delta (fg)(x) - f(x) \Delta g(x) - g(x) \Delta f(x) \right), \\
\Gamma_2(f,g)(x) &= \frac{1}{2} \left( \Delta \Gamma(f,g)(x) - \Gamma(f,\Delta g)(x) - \Gamma(g,\Delta f)(x) \right).
\end{align*}
Using graph Laplacians, which are natural linear operators on functions defined on the vertex set $V$, we can also define these $\Gamma$-operators on graphs and say that a vertex satisfies the Bakry-\'Emery curvature-dimension condition $CD(m,\mathcal{K})$, if the inequality \eqref{eq:CDmK} holds for all functions $f: V \to \mathbb{R}$.  
For a given dimension $m$, the optimal value $\mathcal{K}$ is called the ($m$-dimensional) Bakry-\'Emery curvature at the vertex $x$.

In recent years, Bakry-\'Emery curvature on graphs has been widely studied and a lot of interesting results have been derived (see,
 e.g., \cite{CKLP22,LMP24,Siconolfi21} and references therein). For example, Cushing et al. \cite[Theorem 1.2]{CKLP22} and Siconolfi \cite[Theorem 27]{Siconolfi21} independently observed a relation between Bakry-\'Emery curvature and the smallest eigenvalue of a so-called "Curvature matrix", which makes it easier and more direct to compute the curvature at each vertex. In the graph setting, there are also Bonnet-Myer's type diameter bounds for graphs satisfying some strictly positive  Bakry-\'Emery curvature, and the authors in \cite[Proposition 1.3]{LMP24} investigated rigidity properties of this diameter bound.   
 
The $p$-Laplacian $\Delta_p$ is a widely studied non-linear operator, both in the context of Riemannian manifolds and of graphs. For $p>1$, this operator is defined in the manifold setting as
\begin{align*}
    \Delta_pu=\text{div}(|\grad u|^{p-2}\grad u).
\end{align*}
There are many papers concerned with properties of this operator on manifolds. For example, in \cite{KN09}, B. Kotschwar and L. Ni established a sharp Li-Yau gradient estimate for positive solutions to the $p$-Laplace equation on manifolds of non-negative Ricci curvature. 
There is also a well-studied discrete analogue of $\Delta_p$ for $p>1$ on graphs (see Definition \ref{def:plaplacian}). In \cite{HM15}, B. Hua and  Mugnolo studied the nonlinear Cauchy problem of this $p$-Laplacian on an infinite graph.


In \cite{Wang23}, Wang first introduced the definition of classical operators $\Gamma_{2,p}$ (see \cite[Equation (2.5)]{Wang23}) and the $CD_p$ condition(see \cite[Definition 2.2]{Wang23}). The $CD_p$ condition is an analogue of the Bakry-\'Emery curvature-dimension condition depending on $\Delta_p$ instead of $\Delta$. Moreover, in the spirit of M\"unch \cite{Mu17,Mu18}, he showed that the operators $\Delta_p$, $\Gamma_p$ and $\Gamma_{2,p}$ are directional derivatives of some $\psi$-operators $\Delta_p^{\psi}$, $\Gamma_p^{\psi}$ and $\Gamma_{2,p}^{\psi}$, the latter three of which depend on some concave function $\psi:(0,+\infty)\to\mathbb{R}$. He also introduced the $CD^{\psi}_p$ condition and derived a Davies's gradient estimate for positive solutions to the $p$-Laplace parabolic equation on a connected finite graph with $CD_p^{\psi}$ condition and a corresponding Harnack inequality. 

In \cite{XSW24}, X. Xu, W. Shen and L. Wang proved that a $CD_p$ curvature condition is satisfied on any connected locally finite graph for $p\geq 2$, while it does not hold true for $1<p<2$. Moreover, they established a lower bound for the first nonzero eigenvalue of $\Delta_p$ on a connected finite graph using the $CD_p(m,\K)$ condition in the case of $1<p<2$, $m>\frac{2(p-1)^2}{p}$ and $\K>0$. In \cite{Wang20, Wang24,XSW23}, the authors gave a series of analytical results, such as a Liouville theorem and a Harnack inequality and other results, under the $CD^\psi_p$ condition on graphs, in some cases with the special choice of $\psi=\sqrt{\cdot}$.

In the case $p=2$, the $p$-Laplacian and $p$-curvature both reduce to the notions of the standard Laplacian and original Bakry-\'Emery curvature.


In this paper, we first recall the basic definitions, especially for the operators $\Delta_p$, $\Gamma_p$, $\Gamma_{2,p}$, and Wang's $CD_p(m,\K)$ condition on graphs. Then we consider various examples of paths and star graphs. In the case of $p\geq 2$ and dimension $m=\infty$, we show that $CD_p(\infty,0)$ is satisfied at a middle vertex of $P_3$ (see Proposition \ref{prop:middle_path3}), at a leaf vertex of any path $P_N$ with $N\geq 3$ (see Proposition \ref{prop:leaf_pathn}), and at an arbitrary vertex in the infinite path $P_\infty$ (see Proposition \ref{prop:infinite_path}) or a cycle $C_d$ for $d\geq 5$. In other words, the $CD_p$ curvature (of infinite dimension) at each of the above vertices is non-negative. However, in the case of $1<p<2$, the $CD_p$ curvature at a middle vertex of $P_3$ approaches $-\infty$ surprisingly (see Proposition \ref{prop:negative_curvature}). In the case of a star graph and $p \geq 2$, we derive a precise formula for the $CD_p$ curvature of infinite dimension at any leaf vertex and observe that it is linearly decreasing with respect to the number of total vertices and agrees with the classical Bakry-\'Emery curvature in the case $p=2$ (as verified via the Graph Curvature Calculator, \cite{CKLLS19}). In the last section, we focus on the $CD_p$ curvature on Cartesian product $G_1\times G_2$ of two graphs $G_1=(V_1,E_1)$ and $G_2=(V_2,E_2)$. In the case of $p>2$, we find that, for any two vertices $x\in V_1$ and $y\in V_2$, there exists some function $f:V_1\times V_2 \to \mathbb{R}$, such that the following inequality of $\Gamma_{2,p}$ holds true (see Proposition \ref{prop:negative_pcurvature} below):
\begin{align}\label{ineq:cartesian_product_gamma2}
    \Gamma_{2,p}(f)(x,y)<\Gamma_{2,p}(f^x)(y)+\Gamma_{2,p}(f_y)(x).
\end{align}
Here, $f^x(\cdot):V_2\to \mathbb{R}$, $f_y(\cdot):V_1\to \mathbb{R}$ are defined as $f^x(y)=f_y(x)=f(x,y)$.
However, in the case of $p=2$, the opposite inequality (\ref{ineq:cartesian_product_gamma2}) holds true for any function $f$ (see \cite[Lemma 2.7]{LP18}), and is essential for the proof that Cartesian products of non-negatively curved graphs are, again, non-negatively curved (see \cite[Theorem B]{LP18}). For this reason, an analogous proof for $p$-curvature with $p > 2$ is unfortunately not possible. 

\section{Preliminaries}
Let $G=(V,w,\mu)$ be a weighted graph with the vertex set $V$. We assume that a function $w:V\times V \to [0,+\infty)$ is the edge weight and is symmetric, i.e. $w(u,v)=w(v,u)$ for any $u,v\in V$. We also assume that a  function $\mu:V\to(0,+\infty)$ is the vertex measure. We denote the edge set $E$ as $E:=\{\{u,v\}|w(u,v)=w(u,v)>0\}$. The graph $G$ is undirected according to the symmetry of $w$. For two vertices $u,v\in V$, if there exist vertices $\{x_i\}$ such that $v_0=u\sim v_1 \sim \cdots \sim v_n=v$, we call it a path of length $n$ from $u$ to $v$. Then the combinatorial distance is defined as: for any two vertices $u,v\in V$,
\begin{align*}
    d(u,v):=\inf\{n|\text{There exist a path such that }v_0=u\sim v_1 \sim \cdots \sim v_n=v\}.
\end{align*}
A graph is called connected if for any two vertices, there exists at least one finite path from one to the other. For a vertex $u\in V$, we denote a ball centered at $u$ with radius $i$ as $B_i(u):=\{v\in V|d(u,v)\leq i\}$, and the $i$-th sphere centered at $u$ with radius $i$ as $S_i(u):=\{v\in V|d(u,v)=i\}$. In this paper, we mainly care about the local structure of $2$-ball of a vertex. In particular, for a vertex $u\in V$, we denote $B^{inc}_2(u)$ as the subgraph obtained by deleting the spherical edges on $S_2(u)$ in the induced subgraph of $B_2(u)$. In the following, we introduce three important operators on graphs.

\begin{definition}[$p$-Laplacian]\label{def:plaplacian}
    Let $G=(V,w,\mu)$ be a weighted graph, with a vertex measure $\mu:V\rightarrow{(0,+\infty)}$ and edge weight $w:V\times V\rightarrow{[0,+\infty)}$. For a given constant $p>1$ and any function $u:V\to\mathbb{R}$, the $p$-Laplacian operator $\Delta_{p}$ is defined as: 
    \[\Delta_{p}u(x):=\frac{1}{\mu(x)}\sum_{y:y\sim x}w_{xy}|u(y)-u(x)|^{p-2}(u(y)-u(x)),\]
\end{definition}
In the case of $p=2$, the operator $\Delta_{p}$ reduces to the classical weighted Laplacian $\Delta$: For a function $u:V\to \mathbb{R}$,
\[\Delta u(x):=\frac{1}{\mu(x)}\sum_{y:y\sim x}w_{xy}(u(y)-u(x)).\]

\begin{definition}[$\Gamma_p$-operator]\label{def:gammap}
Let $G=(V,w,\mu)$ be a weighted graph, with a vertex measure $\mu:V\to (0,+\infty)$ and edge weight $w:V\times V\to [0,+\infty)$. For $p>1$ and a fixed function $u:V \to \mathbb{R}$, we define the operator $\Gamma_{p,u}$ as follows: For any vertex $x\in V$ and any two functions $f,g:V\rightarrow{\mathbb{R}}$,
\begin{align}
&\Gamma_{p,u}(f,g)(x):=\notag\\
&\frac{p-1}{2}\frac{1}{\mu(x)}\sum_{y:y\sim x}w_{xy}|u(y)-u(x)|^{p-2}(f(y)-f(x))(g(y)-g(x)).
\end{align}
\end{definition}
For simplicity, we write
\begin{align}
    \Gamma_{p}f(x):=\Gamma_{p,f}(f,f)(x)=\frac{p-1}{2}\frac{1}{\mu(x)}\sum_{y:y\sim x}w_{xy}|f(y)-f(x)|^p.
\end{align}
\begin{definition}[$\Gamma_{2,p}$-operator]\label{def:gamma2p}
Let $G=(V,w,\mu)$ be a weighted graph, with a vertex measure $\mu:V\to (0,+\infty)$ and edge weight $w:V\times V \to [0,+\infty)$. For $p>1$, any vertex $x\in V$ and for any function $f:V\to\mathbb{R}$, we define the operator $\Gamma_{2,p}$ as:
\begin{align}
\Gamma_{2,p}f(x):=&\frac{1}{p(p-1)\mu(x)}\sum_{y:y\sim x}w_{xy}|f(y)-f(x)|^{p-2}(\Gamma_{p}f(y)-\Gamma_{p}f(x))\notag\\
&-\frac{1}{(p-1)^2}\Gamma_{p,f}(f,\Delta_{p}f)(x).
\end{align}
\end{definition}
In the case of $p=2$, these two operators reduce to the classical $\Gamma$- and $\Gamma_2$-operators:
\begin{align}\label{eq:classical_gamma}
\Gamma(f,g)(x):=\frac{1}{2}\frac{1}{\mu(x)}\sum_{y:y\sim x}w_{xy}(f(y)-f(x))(g(y)-g(x))
\end{align}
and
\begin{align}\label{eq:classical_gamma2}
\Gamma_{2}f(x)=\frac{1}{2}\Delta\Gamma(f,f)(x)-\Gamma(f,\Delta f)(x).
\end{align}
Then, we introduce the $CD_p(m,\K)$ condition on graphs depending on these operators.
\begin{definition}[$CD_p(m,\K)$-condition on graphs]
    For $p>1$, $m>0$, $\K\in\mathbb{R}$, we say that a weighted graph $G=(V,w,\mu)$ satisfies the $CD_{p}(m,K)$ condition at all vertices $x\in V$, if for any function $f:V\rightarrow(0,+\infty)$, we have
    \[\Gamma_{2,p}(f)(x)\geq \frac{p-1}{m}(\Delta_{p}f)^2(x)+\K(\Gamma_{p}(f)(x))^{\frac{2p-2}{p}}.\]
    We denote the largest constant $\K$ which satisfies this inequality as $\K_{p,x,G}(m)$.
\end{definition}
In the case of $p=2$, it also reduces to the definition of classical Bakry-\'Emery curvature.
\begin{definition}[Bakry-\'Emery curvature/$CD(m,\K)$-condition]
    Let $G=(V,w,\mu)$ be a weighted graph and $x\in V$ be a vertex. For $\K\in\mathbb{R}$ and $N\in(0,\infty]$, we say it satisfies the $CD(m,\K)$ condition at $x$ if for any function $f:V \to \mathbb{R}$, the following inequality holds true:
    \begin{align}
        \Gamma_{2}(f)(x)\geq \frac{1}{m}(\Delta f(x))^2+\K\Gamma(f)(x).
    \end{align}
    We define the largest constant $\K$ satisfying the above inequality as the $m$-Bakry-\'Emery curvature at this vertex $x\in V$, denoted as $\K_{G,x}(N)$.
\end{definition}

\section{$CD_p(m,\K)$ conditions on Paths}
In this section, we mainly consider about the $p$-curvature on a graph $G=P_N$ or $P_{\infty}$, i.e. a path with finite or infinite length . We consider a middle vertex and a leaf respectively. In this case, we assume that $w_{xy}=1$ for each edge $x\sim y$ and $\mu(x)=1$ for each vertex.

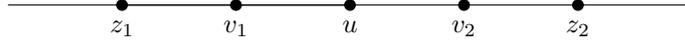
\begin{figure}[!htp]
\centering
\tikzset{vertex/.style={circle, draw, fill=black!20, inner sep=0pt, minimum width=4pt}}
\begin{tikzpicture}[scale=3.0]

\draw (0,0) -- (-0.5,0) node[midway, below, black]{}
		 -- (-1,0) node[midway, above, black]{}
         -- (0,0) node[midway, above, black]{};
\draw (0,0) -- (0.5,0) node[midway, above, black]{};
\draw (0.5,0) -- (1,0) node[midway, right, black]{};
\draw (1,0) -- (1.5,0) node[midway, right, black]{};
\draw (-1,0) -- (-1.5,0) node[midway, right, black]{};

\node at (0,0) [vertex, label={[label distance=0mm]270: \small $u$}, fill=black] {};
\node at (-0.5,0) [vertex, label={[label distance=0mm]270: \small $v_1$}, fill=black] {};
\node at (0.5,0) [vertex, label={[label distance=0mm]270: \small $v_2$}, fill=black] {};
\node at (-1,0) [vertex, label={[label distance=0mm]270: \small $z_1$}, fill=black] {};
\node at (1,0) [vertex, label={[label distance=0mm]270: \small $z_2$}, fill=black] {};

\end{tikzpicture}
\caption{The graph $G=P_{N}$.}
\label{fig:a_vertex_on_general_path}
\end{figure}

Here is the local structure of a middle vertex $u\in P_{N}$ or $P_{\infty}$. We now begin to calculate the $p$-curvature at $u$. For any function $f:V\rightarrow\mathbb{R}$, we denote 
$$A:=f(v_1)-f(u),\,\,B:=f(v_2)-f(u),\,\,C:=f(z_1)-f(v_1),\,\,D:=f(z_2)-f(v_2),$$
for simplicity.
Then we have functions $\Delta f(u)$ and $\Gamma_{p}f(u)$ as follows, respectively:
\begin{align}
\Delta_{p}f(u)=&\sum_{v:v\sim u}|f(v)-f(u)|^{p-2}(f(v)-f(u))=|A|^{p-2}A+|B|^{P-2}B.\label{eq:p_delta_on_path}\\
\Gamma_{p}f(u)=&\frac{p-1}{2}\sum_{v:v\sim u}|f(v)-f(u)|^{p}=\frac{p-1}{2}(|A|^p+|B|^p).\label{eq:p_gamma_on_path}
\end{align}
Next we compute the function $\Gamma_{2,p}f(u)$:
\begin{align}
\Gamma_{2,p}f(u)=&\frac{1}{p(p-1)}\sum_{v:v\sim u}|f(v)-f(u)|^{p-2}(\Gamma_{p}f(v)-\Gamma_{p}f(u))-\frac{1}{(p-1)^2}\Gamma_{p,f}(f,\Delta_{p}f)(u)\notag\\
=&\frac{1}{2p}\left(|A|^{p-2}|C|^p-|A|^{p-2}|B|^p+|B|^{p-2}|D|^p-|B|^{p-2}|A|^p\right)\notag\\
&+\frac{1}{2(p-1)}(2|A|^{2p-2}+2|B|^{2p-2}+2|A|^{p-2}|B|^{p-2}AB-|A|^{p-2}|C|^{p-2}AC\notag\\
&-|B|^{p-2}|D|^{p-2}BD)\label{eq:p_gamma2_on_path}
\end{align}
Then, we consider there terms in different cases of paths.

\subsection{$CD_p(m,\K)$ condition at the middle vertex on $P_3$ and $P_4$}
In the case of $P_3$, we have the following proposition.
\begin{proposition}\label{prop:middle_path3}
    In the case of $p>2$, for a path $G=P_3$ with length $2$, we assume that $\mu(v)=1$ for each vertex $v\in G$ and $w_{uv}=1$ for each edge $u\sim v$, then it satisfies the $CD_{p}(\infty,0)$ condition at the middle vertex $u\in P_{3}$.
\end{proposition}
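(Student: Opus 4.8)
The plan is to show that, for the middle vertex $u$ of $P_3$, the quantity $\Gamma_{2,p}f(u)$ is nonnegative for every $f$. Since $m=\infty$ and $\K=0$ make the entire right-hand side of the $CD_p(m,\K)$ inequality vanish, this is exactly the assertion that the $CD_p(\infty,0)$ condition holds at $u$. Because $\Gamma_{2,p}$, $\Gamma_p$ and $\Delta_p$ depend only on the differences $f(y)-f(x)$, the restriction to positive $f$ in the definition is no loss of generality: adding a large constant makes $f$ positive without changing any of these operators.

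First I would specialize the general formula \eqref{eq:p_gamma2_on_path} to $P_3$. In $P_3$ the neighbors $v_1,v_2$ of $u$ are leaves, so the outer vertices $z_1,z_2$ are absent and the corresponding increments vanish, $C=D=0$. Substituting $C=D=0$ (every term carrying a factor $|C|$ or $|D|$ tends to $0$ since $p>2$) collapses \eqref{eq:p_gamma2_on_path} to
$$\Gamma_{2,p}f(u) = -\frac{1}{2p}\left(|A|^{p-2}|B|^{p}+|B|^{p-2}|A|^{p}\right)+\frac{1}{p-1}\left(|A|^{2p-2}+|B|^{2p-2}+|A|^{p-2}|B|^{p-2}AB\right).$$
Writing $a=|A|$, $b=|B|$ and $\varepsilon=\sgn(AB)\in\{-1,+1\}$, this becomes
$$\Gamma_{2,p}f(u) = \frac{1}{p-1}\left(a^{2p-2}+b^{2p-2}+\varepsilon\,a^{p-1}b^{p-1}\right)-\frac{1}{2p}\,a^{p-2}b^{p-2}\left(a^2+b^2\right).$$
Since the cross term enters with the positive coefficient $\tfrac{1}{p-1}$, the worst case is $\varepsilon=-1$, so it suffices to treat that sign.

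The heart of the matter is therefore the purely algebraic inequality
$$\frac{1}{p-1}\left(a^{2p-2}+b^{2p-2}-a^{p-1}b^{p-1}\right)\ \ge\ \frac{1}{2p}\left(a^{p-2}b^{p}+a^{p}b^{p-2}\right)\qquad(a,b\ge 0),$$
for $p>2$. I would prove it by weighted AM-GM. Choosing the weights $\tfrac{p-2}{2(p-1)}$ and $\tfrac{p}{2(p-1)}$ — both nonnegative precisely because $p>2$, which is where the hypothesis enters — AM-GM gives $a^{p-2}b^{p}\le \tfrac{p-2}{2(p-1)}a^{2p-2}+\tfrac{p}{2(p-1)}b^{2p-2}$ and, symmetrically, $a^{p}b^{p-2}\le \tfrac{p}{2(p-1)}a^{2p-2}+\tfrac{p-2}{2(p-1)}b^{2p-2}$; adding them bounds the right-hand side above by $\tfrac{1}{2p}\left(a^{2p-2}+b^{2p-2}\right)$. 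It then remains to verify
$$\left(\frac{1}{p-1}-\frac{1}{2p}\right)\left(a^{2p-2}+b^{2p-2}\right)\ \ge\ \frac{1}{p-1}\,a^{p-1}b^{p-1},$$
which follows from the elementary bound $a^{2p-2}+b^{2p-2}\ge 2a^{p-1}b^{p-1}$ together with $\tfrac{p+1}{p}\ge 1$.

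The only delicate step is this power inequality, and the thing to check there is admissibility of the AM-GM weights, i.e. their nonnegativity; this is exactly the content of $p>2$. For $p=2$ the weight $\tfrac{p-2}{2(p-1)}$ degenerates to $0$ and the first bound becomes an equality, consistent with the reduction to classical Bakry-\'Emery curvature at the borderline. Degenerate cases $A=0$ or $B=0$ cause no trouble, since $p>2$ makes every term continuous at $a=0$ or $b=0$, so nonnegativity passes to the limit. Equivalently, one may divide through by $b^{2p-2}$ and reduce to showing a single-variable function $h(t)\ge 0$ for $t=a/b\ge 0$ and analyze it directly; I expect the AM-GM route to be cleaner, as it avoids the calculus and makes the role of $p>2$ transparent.
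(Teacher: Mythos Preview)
Your argument is correct. The specialization to $C=D=0$, the reduction to the worst sign $\varepsilon=-1$, the weighted AM--GM bound with weights $\tfrac{p-2}{2(p-1)},\tfrac{p}{2(p-1)}$ (valid precisely for $p\ge 2$), and the final comparison using $a^{2p-2}+b^{2p-2}\ge 2a^{p-1}b^{p-1}$ together with $\tfrac{p+1}{p}\ge 1$ all check out cleanly.

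The paper takes a different route: instead of bounding the negative terms via AM--GM, it rewrites $\Gamma_{2,p}f(u)$ as an explicit sum of three manifestly nonnegative pieces,
\[
\frac{1}{2(p-1)}\bigl(|A|^{p-2}A+|B|^{p-2}B\bigr)^2
+\frac{1}{2(p-1)}\bigl(|A|^{p-2}-|B|^{p-2}\bigr)\bigl(|A|^{p}-|B|^{p}\bigr)
+\Bigl(\tfrac{1}{2(p-1)}-\tfrac{1}{2p}\Bigr)\bigl(|A|^{p-2}|B|^{p}+|B|^{p-2}|A|^{p}\bigr),
\]
where the second factor is nonnegative because $t\mapsto t^{p-2}$ and $t\mapsto t^{p}$ are co-monotone for $p>2$, and the third has positive coefficient. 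This decomposition is an algebraic identity and handles both signs of $AB$ at once without a case split; it also makes the connection to the $(\Delta_p f)^2$ term visible through the perfect square. Your AM--GM proof, on the other hand, isolates exactly where the hypothesis $p>2$ enters (admissibility of the weights) and avoids the need to discover the decomposition. Both arguments are short and self-contained; the paper's is slightly more structural, yours slightly more mechanical.
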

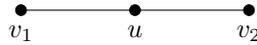
\begin{figure}[!htp]
\centering
\tikzset{vertex/.style={circle, draw, fill=black!20, inner sep=0pt, minimum width=4pt}}
\begin{tikzpicture}[scale=3.0]

\draw (0,0) -- (-0.5,0) node[midway, below, black]{};
\draw (0,0) -- (0.5,0) node[midway, above, black]{};

\node at (0,0) [vertex, label={[label distance=0mm]270: \small $u$}, fill=black] {};
\node at (-0.5,0) [vertex, label={[label distance=0mm]270: \small $v_1$}, fill=black] {};
\node at (0.5,0) [vertex, label={[label distance=0mm]270: \small $v_2$}, fill=black] {};

\end{tikzpicture}
\caption{The graph $G=P_{3}$.}
\label{fig:middle_vertex_on_p3}
\end{figure}

\begin{proof}
When considering the $p$-curvature at the middle vertex $u\in P_3$, it is equivalent to let $C=D=0$ in the above equations (\ref{eq:p_delta_on_path})-(\ref{eq:p_gamma2_on_path}). That is, $\Delta_pf(u)$ and $\Gamma_pf(u)$ stay put while $\Gamma_{2,p}f(u)$ becomes
\begin{align}\label{eq:p_gamma2}
\Gamma_{2,p}f(u)=\frac{1}{p-1}(|A|^{2p-2}+|B|^{2p-2}+|A|^{p-2}|B|^{p-2}AB)-\frac{1}{2p}(|A|^{p-2}|B|^{p}+|B|^{p-2}|A|^{p}).
\end{align}
In the case of $p>2$, we reformulate the above expression as follows
\begin{align}
    \Gamma_{2,p}f(x)=&\frac{1}{p-1}(\frac{1}{2}|A|^{2p-2}+\frac{1}{2}|B|^{2p-2}+|A|^{p-2}|B|^{p-2}AB)\notag\\
    &+\frac{1}{2(p-1)}(|A|^{2p-2}+|B|^{2p-2})-\frac{1}{2p}(|A|^{p-2}|B|^{p}+|B|^{p-2}|A|^{p})\notag\\
    =&\frac{1}{2(p-1)}(|A|^{p-2}A+|B|^{p-2}B)^2\notag\\
    &+\frac{1}{2(p-1)}(|A|^{p-2}|A|^{p}+|B|^{p-2}|B|^{p}-|A|^{p-2}|B|^{p}-|B|^{p-2}|A|^{p})\notag\\
    &+\left(\frac{1}{2(p-1)}-\frac{1}{2p}\right)(|A|^{p-2}|B|^{p}+|B|^{p-2}|A|^{p})\notag\\
    =&\frac{1}{2(p-1)}(|A|^{p-2}A+|B|^{p-2}B)^2\label{eq:gamma2_first_term}\\
    &+\frac{1}{2(p-1)}((|A|^{p-2}-|B|^{p-2})(|A|^{p}-|B|^{p}))\label{eq:gamma2_second_term}\\
    &+\left(\frac{1}{2(p-1)}-\frac{1}{2p}\right)(|A|^{p-2}|B|^{p}+|B|^{p-2}|A|^{p})\label{eq:gamma2_third_term}.
\end{align}
It is direct to check that each term (\ref{eq:gamma2_first_term}), (\ref{eq:gamma2_second_term}), (\ref{eq:gamma2_third_term}) in the above equation is non-negative for any $A$ and $B$. Hence, $\Gamma_{2,p}f(u)$ is non-negative for any function $f$ in the case of $p>2$. 

Recall that $\Gamma_p(f)(u)$ is non-negative for any function $f:V\to \mathbb{R}$, so is the term $(\Gamma_p(f)(u))^{\frac{2p-2}{p}}$, we can conclude that it satisfies the $CD_{p}(\infty,0)$ condition at the vertex $u\in P_{3}$.
\end{proof}

However, it is different in the case of $1<p<2$. 
\begin{proposition}\label{prop:negative_curvature}
    In the case of $1<p<2$, for a path $G=P_3$ with length $2$, we assume that $\mu(v)=1$ for each vertex $v\in G$ and $w_{uv}=1$ for each edge $u\sim v$, then the $p$-curvature at the middle vertex $u\in P_{3}$ will approach $-\infty$. 
\end{proposition}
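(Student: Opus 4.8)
The plan is to exhibit a one-parameter family of test functions along which the quotient defining the $p$-curvature diverges to $-\infty$. Since the middle vertex of $P_3$ has no outer neighbours, I set $C=D=0$ and work with the reduced expression (\ref{eq:p_gamma2}),
\[
\Gamma_{2,p}f(u)=\frac{1}{p-1}\bigl(|A|^{2p-2}+|B|^{2p-2}+|A|^{p-2}|B|^{p-2}AB\bigr)-\frac{1}{2p}\bigl(|A|^{p-2}|B|^{p}+|B|^{p-2}|A|^{p}\bigr).
\]
The crucial observation, in contrast to the case $p>2$ treated in Proposition \ref{prop:middle_path3}, is that the exponent $p-2$ is now \emph{negative}, so the factors $|A|^{p-2}$ and $|B|^{p-2}$ in the subtracted term are unbounded as $A$ or $B$ tends to $0$. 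This is precisely the mechanism that destroys the non-negativity used in the case $p>2$.

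Concretely, I would fix $B=1$ (that is, $f(v_2)-f(u)=1$) and let $A=\epsilon\to 0^{+}$ (that is, $f(v_1)-f(u)=\epsilon$), taking both increments positive so that $\Delta_p f(u)$ stays well defined. Substituting into the formula gives
\[
\Gamma_{2,p}f(u)=\frac{1}{p-1}\bigl(\epsilon^{2p-2}+1+\epsilon^{p-1}\bigr)-\frac{1}{2p}\bigl(\epsilon^{p-2}+\epsilon^{p}\bigr).
\]
As $\epsilon\to 0^{+}$, the first bracket tends to the finite constant $\frac{1}{p-1}$ (both $\epsilon^{2p-2}$ and $\epsilon^{p-1}$ vanish since $p>1$), whereas $\epsilon^{p-2}\to+\infty$ because $p-2<0$; hence $\Gamma_{2,p}f(u)\to-\infty$ along this family. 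Meanwhile the normalising quantities remain under control: $\Gamma_p f(u)=\frac{p-1}{2}(\epsilon^{p}+1)\to\frac{p-1}{2}$, so $(\Gamma_p f(u))^{\frac{2p-2}{p}}$ converges to a finite positive constant, and $\Delta_p f(u)=\epsilon^{p-1}+1\to 1$ keeps $(\Delta_p f(u))^2$ bounded.

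It then follows that, for any fixed $m\in(0,\infty]$, the largest $\K$ for which $\Gamma_{2,p}f(u)\ge\frac{p-1}{m}(\Delta_p f(u))^2+\K\,(\Gamma_p f(u))^{\frac{2p-2}{p}}$ can hold is at most
\[
\frac{\Gamma_{2,p}f(u)-\frac{p-1}{m}(\Delta_p f(u))^2}{(\Gamma_p f(u))^{\frac{2p-2}{p}}},
\]
whose right-hand side tends to $-\infty$ along the family above; thus $\K_{p,u,G}(m)=-\infty$. Since the argument reduces to a single limit computation, there is no serious obstacle: the only point requiring care is to confirm that the divergent contribution $-\frac{1}{2p}|A|^{p-2}|B|^{p}$ is genuinely dominant and is not offset by the term $\frac{1}{p-1}|A|^{p-2}|B|^{p-2}AB$, which carries the larger exponent (it equals $\epsilon^{p-1}$ and stays bounded). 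This confirms that the blow-up is not cancelled and that the curvature indeed approaches $-\infty$.
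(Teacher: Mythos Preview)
Your argument is correct and follows essentially the same route as the paper: you exploit the negative exponent $p-2$ by sending one increment to $0$ while keeping the other fixed, so that the term $-\frac{1}{2p}|A|^{p-2}|B|^{p}$ (or its symmetric counterpart) blows up to $-\infty$ while all competing terms stay bounded. The only cosmetic difference is that the paper normalises by $|A|^{2p-2}$ and sends $x=B/|A|\to 0$, whereas you fix $B=1$ and send $A=\epsilon\to 0^{+}$; by the $A\leftrightarrow B$ symmetry of (\ref{eq:p_gamma2}) these are the same computation, and your explicit check that $(\Gamma_p f(u))^{(2p-2)/p}$ and $(\Delta_p f(u))^2$ remain bounded makes the conclusion about $\K_{p,u,G}(m)$ cleaner than in the paper.
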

\begin{proof}
To see whether the above equation (\ref{eq:p_gamma2}) is non-negative, it is equivalent to check whether the following expression, divided by $|A|^{2p-2}$:
\begin{align}\label{eq:divided_by_p}
    \frac{1}{p-1}\left(1+\frac{|B|^{2p-2}}{|A|^{2p-2}}+\frac{|B|^{p-2}}{|A|^{p-2}}\frac{A}{|A|}\frac{B}{|B|}\right)-\frac{1}{2p}\left(\frac{|B|^p}{|A|^p}+\frac{|B|^{p-2}}{|A|^{p-2}}\right),
\end{align}
is non-negative.
For simplicity, we denote $x:=\frac{B}{|A|}$ and $p=1+a$ for $0<a<1$, then the expression (\ref{eq:divided_by_p}) becomes
\begin{align}
    \frac{1}{p-1}(1+|x|^{2a}+|x|^{a-1}(\pm1)x)-\frac{1}{2p}(x^{1+a}+x^{a-1}).
\end{align}
By letting $x$ approach to $0$, we obtain all the terms except $-\frac{1}{2p}x^{a-1}$ are finite, while this this term approaches to $-\infty$. Therefore, the $p$-curvature at the middle vertex $x\in P_3$ will approach to $-\infty$, in the case of $1<p<2$.
\end{proof}

\begin{proposition}\label{prop:middle_path4}
    In the case of $p>2$, for a path $G=P_4$ with length $3$, we assume that $\mu(v)=1$ for each vertex $v\in G$ and $w_{uv}=1$ for each edge $u\sim v$, then it satisfies the $CD_{p}(\infty,0)$ condition at the middle vertex $u\in P_{3}$.
\end{proposition}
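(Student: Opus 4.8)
The plan is to specialize the general middle-vertex formula \eqref{eq:p_gamma2_on_path} to the local geometry of $P_4$ and then peel off the one new variable so that a three-variable non-negativity problem collapses to a tight one-variable inequality. Label the middle vertex $u$ so that its leaf neighbour is $v_1$ and its degree-two neighbour is $v_2$ (with $v_2\sim z_2$). Since the leaf $v_1$ has no outer neighbour, computing $\Gamma_p f(v_1)=\frac{p-1}{2}|A|^p$ is exactly what setting $C=0$ does in \eqref{eq:p_gamma2_on_path}. With $m=\infty$ the condition $CD_p(\infty,0)$ reduces to $\Gamma_{2,p}f(u)\ge 0$, so I must establish this for all $A,B,D$. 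First I would write the $C=0$ specialization as $\Gamma_{2,p}f(u)=Q(A,B)+E(B,D)$, where $Q(A,B)$ is precisely the $P_3$ expression on the right of \eqref{eq:p_gamma2} (already shown non-negative in Proposition \ref{prop:middle_path3}) and $E(B,D)=\frac{1}{2p}|B|^{p-2}|D|^{p}-\frac{1}{2(p-1)}|B|^{p-2}|D|^{p-2}BD$ collects every term containing $D$. The key structural point is that $A$ occurs only in $Q$ and $D$ only in $E$, so for fixed $B$ they may be minimized independently.

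Next I would minimize $E$ in $D$ alone. Substituting $r=|D|/|B|\ge 0$ (the case $B=0$ being immediate, as $p-2>0$) and examining $\frac{1}{2p}r^{p}-\frac{1}{2(p-1)}r^{p-1}$, whose derivative $\frac12 r^{p-2}(r-1)$ vanishes only at $r=1$, gives $\min_D E(B,D)=-\frac{1}{2p(p-1)}|B|^{2p-2}$, attained at $D=B$ (the opposite-sign choice makes $E\ge0$). Because of the independence just noted, it then suffices to prove the two-variable inequality
\[F(A,B):=Q(A,B)-\frac{1}{2p(p-1)}|B|^{2p-2}\ \ge\ 0.\]
Here I would use that $F$ is homogeneous of degree $2p-2$ and invariant under $(A,B)\mapsto(-A,-B)$, which reduces everything to $B=1$ with $A$ ranging over $\mathbb{R}$.

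After collecting the $|B|^{2p-2}$ terms, the decisive case $A=-r$ turns $F(-r,1)\ge0$ into
\[\frac{1}{p-1}r^{2p-2}+\frac{2p-1}{2p(p-1)}\ \ge\ \frac{1}{p-1}r^{p-1}+\frac{1}{2p}r^{p-2}+\frac{1}{2p}r^{p},\]
while the case $A\ge 0$ only flips the $r^{p-1}$ term to a favourable sign and is handled a fortiori. I would bound each right-hand term by a convex combination of $r^{2p-2}$ and $1$ through weighted AM--GM: $r^{p-1}\le\frac12 r^{2p-2}+\frac12$, $r^{p}\le\alpha r^{2p-2}+(1-\alpha)$ and $r^{p-2}\le\beta r^{2p-2}+(1-\beta)$ with $\alpha=\frac{p}{2(p-1)}$ and $\beta=\frac{p-2}{2(p-1)}$; for $p>2$ all three exponents lie in $(0,1)$, so the bounds are legitimate. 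The arithmetic that makes this close is that $\alpha+\beta=1$: the induced coefficient of $r^{2p-2}$ then collapses to $\frac{1}{2(p-1)}+\frac{1}{2p}\le\frac{1}{p-1}$, and the induced constant is exactly $\frac{1}{2(p-1)}+\frac{1}{2p}=\frac{2p-1}{2p(p-1)}$, matching the left-hand side. This yields $F\ge0$, hence $\Gamma_{2,p}f(u)\ge0$; since $(\Gamma_p f(u))^{\frac{2p-2}{p}}\ge0$, the vertex satisfies $CD_p(\infty,0)$.

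The main obstacle is this final scalar inequality, whose extremizer is transcendental and so cannot be exhibited in closed form. What makes it tractable is twofold: the exact decoupling of $A$ and $D$, which converts the genuine three-variable problem into a sharp two-variable one with the optimal $D$-penalty $-\frac{1}{2p(p-1)}|B|^{2p-2}$, and the AM--GM splitting whose exponents are forced to sum to $1$. I expect most of the effort to lie in checking that these weight choices are the only ones for which the coefficient of $r^{2p-2}$ stays below $\frac{1}{p-1}$ \emph{and} the constants match rather than merely dominate, since it is precisely this exact matching of constants that leaves no slack to fix by other means.
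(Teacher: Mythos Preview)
Your reduction is essentially the paper's: set the leaf-side outer increment to zero, minimize out the remaining outer increment via the one-variable computation $\min_{t\ge0}\bigl(\frac{1}{2p}t^{p}-\frac{1}{2(p-1)}t^{p-1}\bigr)=-\frac{1}{2p(p-1)}$, and normalize to a single variable. After your relabeling (you set $C=0$ and keep $D$, the paper sets $D=0$ and keeps $C$; the roles of $A,B$ swap accordingly), your inequality $F(-r,1)\ge0$ is literally the paper's $h(x)\ge0$, since $\frac{1}{2p}+\frac{1}{2(p-1)}=\frac{2p-1}{2p(p-1)}$.

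The only genuine difference is the last step. The paper verifies $h(x)\ge0$ by an explicit algebraic factorization,
\[
h(x)=\frac{1}{2p}(x^{p}-1)(x^{p-2}-1)+\frac{1}{2(p-1)}\Bigl((x^{p-1}-1)^{2}+\tfrac{1}{p}\,x^{2p-2}\Bigr),
\]
each summand manifestly non-negative for $p>2$. You instead bound $r^{p-1},r^{p},r^{p-2}$ by weighted AM--GM with weights $\tfrac12,\ \alpha=\tfrac{p}{2(p-1)},\ \beta=\tfrac{p-2}{2(p-1)}$. Your computation is correct: $\alpha,\beta\in(0,1)$ for $p>2$, $\alpha+\beta=1$ gives the $r^{2p-2}$-coefficient $\frac{1}{2(p-1)}+\frac{1}{2p}\le\frac{1}{p-1}$, and the constants match exactly. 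The paper's factorization is shorter and exhibits where the slack sits; your AM--GM route explains transparently why the coefficients close up. Either proof is fine.
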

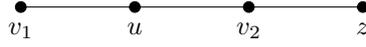
\begin{figure}[!htp]
\centering
\tikzset{vertex/.style={circle, draw, fill=black!20, inner sep=0pt, minimum width=4pt}}
\begin{tikzpicture}[scale=3.0]

\draw (0,0) -- (-0.5,0) node[midway, below, black]{};
\draw (0,0) -- (0.5,0) node[midway, above, black]{};
\draw (0.5,0) -- (1,0) node[midway, above, black]{};

\node at (0,0) [vertex, label={[label distance=0mm]270: \small $u$}, fill=black] {};
\node at (-0.5,0) [vertex, label={[label distance=0mm]270: \small $v_1$}, fill=black] {};
\node at (0.5,0) [vertex, label={[label distance=0mm]270: \small $v_2$}, fill=black] {};
\node at (1,0) [vertex, label={[label distance=0mm]270: \small $z$}, fill=black] {};

\end{tikzpicture}
\caption{The graph $G=P_{4}$.}
\label{fig:middle_vertex_on_p4}
\end{figure}

\begin{proof}
    Assume $u\in V$ is a middle vertex in $P_4$, the local structure is shown in Figure \ref{fig:middle_vertex_on_p4}. For any function $f:P_4\to \mathbb{R}$, it is equivalent to let $D=0$ in the above equation (\ref{eq:p_delta_on_path})-(\ref{eq:p_gamma2_on_path}). We get $\Delta_pf(u)$ and $\Gamma_pf(u)$ stay put while $\Gamma_{2,p}f(u)$ becomes
    \begin{align*}
    \Gamma_{2,p}f(u)=&\frac{1}{2p}(|A|^{p-2}|C|^p-|A|^{p-2}|B|^p-|B|^{p-2}|A|^p)\\
    &+\frac{1}{2(p-1)}(2|A|^{2p-2}+2|B|^{2p-2}+2|A|^{p-2}|B|^{p-2}AB-|A|^{p-2}|C|^{p-2}AC).
    \end{align*}
    Divided by the non-negative term $|A|^{2p-2}$, we get
    \begin{align*}
        \frac{\Gamma_{2,p}f(x)}{|A|^{2p-2}}=&\frac{1}{2p}\frac{|C|^p}{|A|^p}-\frac{1}{2p}\frac{|B|^p}{|A|^P}-\frac{1}{2p}\frac{|B|^{p-2}}{|A|^{p-2}}\\
        &+\frac{1}{p-1}+\frac{1}{p-1}\frac{|B|^{2p-2}}{|A|^{2p-2}}+\frac{1}{p-1}\frac{|B|^{p-2}}{|A|^{p-2}}\frac{B}{|A|}-\frac{1}{2(p-1)}\frac{|C|^{p-2}}{|A|^{p-2}}\frac{C}{A}
    \end{align*}
    For simplicity, we denote: $x:=\frac{B}{|A|}$, $y:=\frac{C}{|A|}$ and reconsider the above term $\frac{\Gamma_{2,p}f(x)}{|A|^{2p-2}}$ as a function $h(x)$, that is,
    \begin{align*}
        h(x):=\frac{1}{2p}|y|^p-\frac{1}{2p}|x|^{p}-\frac{1}{2p}|x|^{p-2}+\frac{1}{p-1}+\frac{1}{p-1}|x|^{2p-2}+\frac{1}{p-1}|x|^{p-2}x-\frac{1}{2(p-1)}|y|^{p-2}y
    \end{align*}
    Without loss of generality, we can assume that $x \le 0$ and $y\ge 0$. For simplicity, we replace $-x$ by $x$ with the later one satisfying $x\ge 0$. Then we have
    \begin{align*}
        h(x):&=\frac{1}{2p}y^p-\frac{1}{2p}x^p-\frac{1}{2p}x^{p-2}+\frac{1}{p-1}+\frac{1}{p-1}x^{2p-2}-\frac{1}{p-1}x^{p-1}-\frac{1}{2(p-1)}y^{p-1}\\
        &=(\frac{1}{2p}y^p-\frac{1}{2(p-1)}y^{p-1})+(\frac{1}{p-1}x^{2p-2}-\frac{1}{2p}x^{p-2}-\frac{1}{p-1}x^{p-1}-\frac{1}{2p}x^p)+\frac{1}{p-1}
    \end{align*}
    Since the variables $x$ and $y$ are independent, then we consider the terms involving $y$ first. We can get 
    $$\min_{y \ge 0}(\frac{1}{2p}y^p-\frac{1}{2(p-1)}y^{p-1}) = \frac{1}{2p}-\frac{1}{2(p-1)},$$
    by differentiating with respect to $y$. Replacing the terms by its minimum yields
    \begin{align*}
    h(x)&=\frac{1}{p-1}x^{2p-2}-\frac{1}{2p}x^{p-2}-\frac{1}{p-1}x^{p-1}-\frac{1}{2p}x^p+\frac{1}{2p}+\frac{1}{2(p-1)}\\
    &=\frac{1}{2p}(x^{2p-2}-x^{p-2}-x^p+1)+\frac{p+1}{2p(p-1)}x^{2p-2}-\frac{1}{p-1}x^{p-1}+\frac{1}{2(p-1)}\\
    &=\frac{1}{2p}(x^p-1)(x^{p-2}-1)+\frac{1}{2(p-1)}\left((x^{p-1}-1)^2+\frac{1}{p}x^{2p-2}\right) \ge 0.
    \end{align*}
    Therefore, we conclude that it satisfies $CD_p(\infty,0)$ condition at a middle vertex on the path $P_4$, in the case of $p>2$.
\end{proof}
Again, different result occurs in the case of $1<p<2$.
\begin{proposition}\label{prop:negative_curvature}
    In the case of $1<p<2$, for a path $G=P_4$ with length $3$, we assume that $\mu(v)=1$ for each vertex $v\in G$ and $w_{uv}=1$ for each edge $u\sim v$, then the $p$-curvature at the middle vertex $u\in P_{3}$ will approach $-\infty$. 
\end{proposition}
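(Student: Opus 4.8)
The plan is to derive this from the $P_3$ computation by a simple decoupling argument, so that no new estimate is needed. I would begin from the formula for $\Gamma_{2,p}f(u)$ at a middle vertex of $P_4$ obtained in the proof of Proposition \ref{prop:middle_path4}, namely (\ref{eq:p_gamma2_on_path}) specialised to $D=0$ (reflecting that one neighbour of $u$ is a leaf). Since the $CD_p(\infty,0)$ condition must hold for \emph{every} admissible function, in order to see that the curvature at $u$ is $-\infty$ it is enough to produce a single family of test functions along which the relevant quotient diverges to $-\infty$. I would take functions with $C=0$, i.e.\ functions that are constant along the outer edge $\{v_1,z_1\}$.

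The crucial point is that setting $C=0$ kills both $C$-dependent contributions to $\Gamma_{2,p}f(u)$ — the term $\frac{1}{2p}|A|^{p-2}|C|^p$ and the term $-\frac{1}{2(p-1)}|A|^{p-2}|C|^{p-2}AC$ (the latter equal to a multiple of $|C|^{p-1}\sgn(C)$, hence vanishing at $C=0$ because $p>1$). What remains is \emph{exactly} the $P_3$ expression (\ref{eq:p_gamma2}); equivalently, with $C=0$ one has $\Gamma_p f(v_1)=\frac{p-1}{2}|A|^p$, precisely as if $v_1$ were a leaf. From this point I would repeat the degeneration used in the $P_3$ case: normalise $A=1$ (legitimate, since $\Delta_p$, $\Gamma_p$ and $\Gamma_{2,p}$ are unchanged when a constant is added to $f$, so the restriction to $f>0$ costs nothing), set $x=B/|A|$, and let $x\to 0$. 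As in (\ref{eq:divided_by_p}), every term stays bounded except $-\frac{1}{2p}|B|^{p-2}$, which tends to $-\infty$ because $p-2<0$ when $1<p<2$.

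It then remains to pass from the blow-up of $\Gamma_{2,p}f(u)$ to the claim about the curvature itself. Along this family $\Gamma_p f(u)=\frac{p-1}{2}(|A|^p+|B|^p)\to\frac{p-1}{2}$ stays bounded and bounded away from $0$, so $(\Gamma_p f(u))^{\frac{2p-2}{p}}$ tends to a positive constant; hence the quotient $\Gamma_{2,p}f(u)/(\Gamma_p f(u))^{\frac{2p-2}{p}}\to-\infty$, which is the assertion that the largest admissible $\K$ — the $p$-curvature at $u$ in dimension $m=\infty$ — equals $-\infty$. I do not anticipate a genuine obstacle: the statement is essentially a corollary of the $P_3$ case, its only content being the observation that $C=0$ decouples the extra leaf. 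The sole points demanding care are bookkeeping — checking that the $C$-terms truly drop out, confirming that the divergent term carries sign $-\infty$ rather than $+\infty$ (this hinges on $p-2<0$ together with $B\to0$), and recording that it is the normalised quotient, not $\Gamma_{2,p}f(u)$ alone, that diverges.
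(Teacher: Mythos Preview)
Your argument is correct, and the underlying mechanism is the same as the paper's: fix $A$ and send $B\to 0$ so that the term $-\tfrac{1}{2p}|A|^{p}|B|^{p-2}$ in $\Gamma_{2,p}f(u)$ blows up to $-\infty$ while $\Gamma_p f(u)$ stays bounded away from zero. The only difference is in how the variable $C$ is handled. You set $C=0$, which cleanly collapses the $P_4$ expression to the $P_3$ expression (\ref{eq:p_gamma2}) and lets you invoke Proposition~\ref{prop:negative_curvature} for $P_3$ verbatim. The paper instead reuses the function $h(x)$ obtained in the proof of Proposition~\ref{prop:middle_path4} after minimising in $y=C/|A|$ (so effectively $y=1$), and then lets $x\to 0$ in that formula. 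Your reduction-to-$P_3$ framing is conceptually tidier and makes the logical dependence on the earlier proposition explicit; the paper's version is marginally shorter because $h(x)$ was already on the page. You also spell out the passage from $\Gamma_{2,p}\to-\infty$ to $\K_{p,u,G}(\infty)=-\infty$ via the boundedness of $(\Gamma_p f(u))^{(2p-2)/p}$, which the paper leaves implicit.
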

\begin{proof}
    By a similar argument with the proof for Proposition \ref{prop:middle_path4}, we obtain the function $h(x)$ as follows:
    \begin{align*}
        h(x)=\frac{1}{p-1}x^{2p-2}-\frac{1}{2p}x^{p-2}-\frac{1}{p-1}x^{p-1}-\frac{1}{2p}x^p+\frac{1}{2p}+\frac{1}{2(p-1)}.
    \end{align*}
    Since now $p$ satisfies $1<p<2$, we have $p-1>0$ and $p-2<0$. Letting $x \to 0$ yields $x^{p-2} \to \infty$ while other terms are finite. Therefore, $h(0)$ goes to $-\infty$ and this completes the proof.
\end{proof}

\subsection{$CD_p(m,\K)$ conditions at a leaf on $P_N$}
In this subsection, we consider the $p$-curvature at a leaf vertex on a path $P_{N}$ for an integer $N\geq2$.
\begin{figure}[!htp]
\centering
\tikzset{vertex/.style={circle, draw, fill=black!20, inner sep=0pt, minimum width=4pt}}
\begin{tikzpicture}[scale=3.0]

\draw (0,0) -- (-0.5,0) node[midway, below, black]{};
\draw (0,0) -- (0.5,0) node[midway, above, black]{};
\draw (0.5,0) -- (1,0) node[midway, above, black]{};

\node at (0,0) [vertex, label={[label distance=0mm]270: \small $v$}, fill=black] {};
\node at (-0.5,0) [vertex, label={[label distance=0mm]270: \small $u$}, fill=black] {};
\node at (0.5,0) [vertex, label={[label distance=0mm]270: \small $z$}, fill=black] {};

\end{tikzpicture}
\caption{The graph $G=P_{N}$.}
\label{fig:a_leaf_on_a_path}
\end{figure}
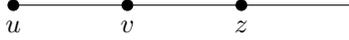
\begin{proposition}\label{prop:leaf_pathn}
In the case of $p>1$, for a path $G=P_N$ with length $N\geq2$, we assume that $\mu(v)=1$ for each vertex $v\in G$ and $w_{uv}=1$ for each edge $u\sim v$, then it satisfies the $CD_{p}(\infty,0)$ condition at a leaf vertex $u\in P_{N}$.
\end{proposition}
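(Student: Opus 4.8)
The plan is to mimic the computations in the proofs of Propositions \ref{prop:middle_path3} and \ref{prop:middle_path4}, but specialized to the local structure at a leaf. Label the leaf $u$, its unique neighbor $v$, and (when $N\geq 3$) the further neighbor $z$ of $v$; set $A:=f(v)-f(u)$ and $C:=f(z)-f(v)$, with the convention $C=0$ when $N=2$, so that a single formula covers all $N\geq 2$. First I would record the local operator values $\Delta_p f(u)=|A|^{p-2}A$ and $\Gamma_p f(u)=\frac{p-1}{2}|A|^p$, together with $\Gamma_p f(v)=\frac{p-1}{2}(|A|^p+|C|^p)$ and $\Delta_p f(v)=-|A|^{p-2}A+|C|^{p-2}C$. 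Substituting these into Definition \ref{def:gamma2p} and simplifying, I expect to reach the closed form
\begin{align*}
\Gamma_{2,p}f(u)=\frac{1}{2p}|A|^{p-2}|C|^p+\frac{1}{p-1}|A|^{2p-2}-\frac{1}{2(p-1)}|A|^{p-2}|C|^{p-2}AC.
\end{align*}

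The decisive structural observation is that the spherical difference $\Gamma_p f(v)-\Gamma_p f(u)=\frac{p-1}{2}|C|^p$ is non-negative, so the first summand above is manifestly $\geq 0$. This is exactly what fails at a middle vertex and is the reason the leaf case behaves well for all $p>1$. To finish, I assume first that $A\neq 0$, divide by $|A|^{2p-2}>0$, and use that the mixed term $|A|^{p-2}|C|^{p-2}AC=\sgn(AC)\,|A|^{p-1}|C|^{p-1}$ is maximized when $AC>0$. Writing $r:=|C|/|A|\geq 0$, it then suffices to prove
\begin{align*}
g(r):=\frac{1}{2p}r^p-\frac{1}{2(p-1)}r^{p-1}+\frac{1}{p-1}\geq 0\qquad(r\geq 0).
\end{align*}
A one-variable calculation gives $g'(r)=\frac{1}{2}r^{p-2}(r-1)$, so the minimum over $r\geq 0$ is attained at $r=1$, where $g(1)=\frac{1}{2p}+\frac{1}{2(p-1)}>0$ for every $p>1$. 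Hence $\Gamma_{2,p}f(u)\geq 0$ whenever $A\neq 0$.

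It remains to treat the degenerate direction $A=0$, which is precisely where the middle-vertex argument produced $-\infty$ for $1<p<2$. For $p>2$ all three summands vanish as $A\to 0$, while at $p=2$ the expression reduces to $\frac{1}{4}C^2\geq 0$; in both cases $\Gamma_{2,p}f(u)\geq 0$ directly. For $1<p<2$ the non-negative term $\frac{1}{2p}|A|^{p-2}|C|^p$ tends to $+\infty$, while the mixed term carries a factor $|A|^{p-1}\to 0$ and stays bounded, so no blow-up to $-\infty$ can occur. In all cases $\Gamma_{2,p}f(u)\geq 0$, and since $\Gamma_p f(u)\geq 0$ the relevant right-hand side $\frac{p-1}{\infty}(\Delta_p f(u))^2+0\cdot(\Gamma_p f(u))^{\frac{2p-2}{p}}$ vanishes, so the $CD_p(\infty,0)$ condition holds at $u$. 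I expect the only genuinely delicate point to be this degenerate case $A=0$ for $1<p<2$, where one must argue that the positive spherical contribution dominates rather than naively setting $A=0$ in a formula that still contains the factor $|A|^{p-2}$.
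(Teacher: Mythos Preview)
Your proposal is correct and follows essentially the same route as the paper: derive the closed form for $\Gamma_{2,p}f(u)$ by specializing to $B=D=0$, divide by $|A|^{2p-2}$, reduce to the one-variable function $g$, and locate its minimum at $r=1$ with value $\frac{1}{2p}+\frac{1}{2(p-1)}>0$. The only difference is that you explicitly discuss the degenerate case $A=0$ (particularly for $1<p<2$), which the paper simply omits when dividing by $|A|^{2p-2}$; your treatment there is a welcome bit of extra care rather than a new idea.
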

\begin{proof}
It is equivalent to let $B=D=0$ in the above equation (\ref{eq:p_gamma2_on_path}) for $\Gamma_{2,p}f(u)$. Then we obtain 
\begin{align}
 \Gamma_{2,p}f(x)=\frac{1}{p-1}|A|^{2p-2}+\frac{1}{2p}|A|^{p-2}|C|^p-\frac{1}{2(p-1)}|A|^{p-2}|C|^{p-2}AC.       
\end{align}
Again, we divide it by $|A|^{2p-2}$ and we have
\begin{align*}
\frac{\Gamma_{2,p}f(x)}{|A|^{2p-2}}=\frac{1}{p-1}+\frac{1}{2p}\frac{|C|^p}{|A|^p}-\frac{1}{2(p-1)}\frac{|C|^{p-2}}{|A|^{p-2}}\frac{C}{|A|}.
\end{align*}
We regard it as a function $g(x)$ of $x:=\frac{C}{|A|}$ as the following:
\begin{align}
    g(x)=\frac{1}{p-1}+\frac{1}{2p}|x|^p-\frac{1}{2(p-1)}|x|^{p-2}x
\end{align}
In the case of $x\leq0$, i.e. $C\leq 0$, it always holds that $g(x)
\geq 0$. So we only need to consider the case $x>0$, i.e. $C>0$. In this case, we have
\begin{align*}
    g(x)=\frac{1}{p-1}+\frac{1}{2p}x^p-\frac{1}{2(p-1)}x^{p-1}.
\end{align*}
At the point $x=0$, we obtain $g(x)=\frac{1}{p-1}>0$. Taking the derivative yields
\[g'(x)=\frac{1}{2}x^{p-1}-\frac{1}{2}x^{p-2}=\frac{1}{2}x^{p-2}(x-1).\]
The zero points of $g'(x)$ are $x=0$ and $x=1$. And we know that this function $g(x)$ is decreasing only in the interval $(0,1)$ so it attains its minimal value as the point $x=1$, whose value is given by $g(1)=\frac{1}{2p}+\frac{1}{2(p-1)}>0$. Therefore, $\Gamma_{2,p}f(u)$ is always non-negative for any function $f$.

Since $\Gamma_p(f)(u)$ is non-negative for any function $f$ and any vertex $u \in G$, so is the term $(\Gamma_p(f)(u))^{\frac{2p-2}{p}}$. In  conclusion, it satisfies the $CD_{p}(\infty,0)$ condition at a leaf vertex $u\in P_{N}$ for $N\geq2$.
\end{proof}

\subsection{$CD_{p}(m,\K)$ condition on a general path $P_{N}$ or $P_{\infty}$}

In this subsection, we pay attention to a middle vertex in a general path $P_N$ with $N\geq 5$, shown in Figure \ref{fig:a_vertex_on_general_path}. Here comes our result.
\begin{proposition}\label{prop:infinite_path}
    In the case of $p>2$, for a general path $G=P_N$ with $N\geq 5$ or $G=P_{\infty}$, we assume that $\mu(v)=1$ for each vertex $v\in G$ and $w_{uv}=1$ for each edge $u\sim v$, then it satisfies the $CD_{p}(\infty,0)$ condition at any middle vertex $u\in G$.
\end{proposition}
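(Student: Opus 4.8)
The plan is to prove that $\Gamma_{2,p}f(u)\ge 0$ holds for every $f:V\to\IR$; since for $m=\infty$ and $\K=0$ the whole right-hand side of the $CD_p$ inequality vanishes, this is exactly the assertion $CD_p(\infty,0)$ at $u$. I start from the full formula (\ref{eq:p_gamma2_on_path}), where now, for a middle vertex of $P_N$ with $N\ge5$ or of $P_\infty$, all four increments $A,B,C,D$ are unconstrained. The structural point is that $C$ enters (\ref{eq:p_gamma2_on_path}) only through the leaf-type pair $\frac{1}{2p}|A|^{p-2}|C|^p-\frac{1}{2(p-1)}|A|^{p-2}|C|^{p-2}AC$ and $D$ only through the analogous pair built from $B,D$; the two are decoupled from each other and from the remaining $A,B$-terms. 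In fact the entire expression equals the one appearing in the proof of Proposition \ref{prop:middle_path4} plus this extra $D$-cluster.

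First I would minimise over $C$ and over $D$ separately. A one-variable computation shows
\begin{align*}
\min_{C\in\IR}\Big(\tfrac{1}{2p}|A|^{p-2}|C|^p-\tfrac{1}{2(p-1)}|A|^{p-2}|C|^{p-2}AC\Big)=-\tfrac{1}{2p(p-1)}|A|^{2p-2},
\end{align*}
the minimum being attained at $C=A$ and being independent of the sign of $A$, and symmetrically for the $D$-cluster in terms of $|B|$. Substituting these two lower bounds reduces the claim to the two-variable inequality
\begin{align*}
\Phi(A,B):={}&\tfrac{1}{p-1}\big(|A|^{2p-2}+|B|^{2p-2}+|A|^{p-2}|B|^{p-2}AB\big)-\tfrac{1}{2p}\big(|A|^{p-2}|B|^p+|B|^{p-2}|A|^p\big)\\
&-\tfrac{1}{2p(p-1)}\big(|A|^{2p-2}+|B|^{2p-2}\big)\ \ge\ 0,
\end{align*}
that is, the $P_3$-expression of Proposition \ref{prop:middle_path3} weakened by the two deficits just produced.

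To finish I would use homogeneity and a worst-case sign. The only sign-sensitive term of $\Phi$ is the cross term $\tfrac{1}{p-1}|A|^{p-2}|B|^{p-2}AB$, smallest when $A,B$ have opposite signs, so it suffices to take $AB=-|A|\,|B|$. Dividing by $|A|^{2p-2}$ and writing $x=|B|/|A|\ge0$, the inequality $\Phi\ge0$ becomes, exactly as in the computation of Proposition \ref{prop:middle_path4},
\begin{align*}
\frac{\Phi}{|A|^{2p-2}}=\frac{1}{2p}\,(x^p-1)(x^{p-2}-1)+\frac{1}{2(p-1)}\,(x^{p-1}-1)^2 .
\end{align*}
Both summands are non-negative for $p>2$: the second is a square, and $x^p-1$ and $x^{p-2}-1$ share the same sign precisely because $p-2>0$. (The degenerate cases $A=0$ or $B=0$ are checked directly and yield $\tfrac{2p-1}{2p(p-1)}|B|^{2p-2}\ge0$, resp. the symmetric quantity.)

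The step I expect to be the real obstacle is this last reduction. Minimising the $A,B,C$-part alone reproduces precisely the function from the proof of Proposition \ref{prop:middle_path4}, which still carries a residual $+\tfrac{1}{2p(p-1)}x^{2p-2}$ term; the new deficit coming from minimising the $D$-cluster contributes exactly $-\tfrac{1}{2p(p-1)}x^{2p-2}$, and it is this exact cancellation that collapses everything to the transparently non-negative, fully factored form above. Confirming that the two deficits combine this precisely, leaving no residual term of indefinite sign, is the crux — and it is also where $p>2$ is indispensable, since for $1<p<2$ the factor $x^{p-2}-1$ acquires the wrong sign (and the cluster minimisation itself fails), in line with the $-\infty$ behaviour found at the middle vertices of the short paths.
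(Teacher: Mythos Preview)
Your proof is correct and follows essentially the same route as the paper: isolate the decoupled $C$- and $D$-clusters, minimise each to obtain the deficits $-\tfrac{1}{2p(p-1)}|A|^{2p-2}$ and $-\tfrac{1}{2p(p-1)}|B|^{2p-2}$, and then factor the remaining two-variable expression as a sum of non-negative terms. The only cosmetic difference is that you explicitly take the worst-case sign $AB<0$ for the cross term, arriving at $\tfrac{1}{2p}(x^p-1)(x^{p-2}-1)+\tfrac{1}{2(p-1)}(x^{p-1}-1)^2$, whereas the paper's ``WLOG $x,y,z\ge0$'' leads to the same expression with $(1+x^{p-1})^2$ in place of $(x^{p-1}-1)^2$; your sign analysis is in fact the more careful of the two, and your observation that the $D$-deficit exactly cancels the residual $\tfrac{1}{2p(p-1)}x^{2p-2}$ from the $P_4$ computation is a nice structural point the paper does not make explicit.
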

\begin{proof}
    From the equation (\ref{eq:p_gamma2_on_path}) above, we reformulate it as
    \begin{align}\label{eq:gamma2p_path}
        \Gamma_{2,p}f(u)=&\frac{1}{p-1}(|A|^{2p-2}+|B|^{2p-2})+\frac{1}{p-1}|A|^{p-2}|B|^{p-2}AB-\frac{1}{2p}(|A|^{p-2}|B|^p+|B|^{p-2}|A|^{p})\\
        &+\left(\frac{1}{2p}|A|^{p-2}|C|^{p}-\frac{1}{2(p-1)}|A|^{p-2}|C|^{p-2}AC\right)\\
        &+\left(\frac{1}{2p}|B|^{p-2}|D|^{p}-\frac{1}{2(p-1)}|B|^{p-2}|D|^{p-2}BD\right).
    \end{align}
    Again, to see whether it is non-negative is equivalent to divide it by $|A|^{2p-2}$. That is,
    \begin{align*}
        \frac{\Gamma_{2,p}f(u)}{|A|^{2p-2}}=&\frac{1}{p-1}\left(1+\frac{|B|^{2p-2}}{|A|^{2p-2}}\right)+\frac{1}{p-1}\left(\frac{|B|^{p-2}}{|A|^{p-2}}\frac{A}{|A|}\frac{B}{|A|}\right)-\frac{1}{2p}\left(\frac{|B|^p}{|A|^p}+\frac{|B|^{p-2}}{|A|^{p-2}}\right)\\
        &+\frac{1}{2p}\left(\frac{|C|^P}{|A|^p}\right)-\frac{1}{2(p-1)}\left(\frac{|C|^{p-2}}{|A|^{p-2}}\frac{A}{|A|}\frac{C}{|A|}\right)\\
        &+\frac{1}{2p}\left(\frac{|B|^{p-2}}{|A|^{p-2}}\frac{|D|^p}{|A|^p}\right)-\frac{1}{2(p-1)}\left(\frac{|B|^{p-2}}{|A|^{p-2}}\frac{|D|^{p-2}}{|A|^{p-2}}\frac{B}{|A|}\frac{D}{|A|}\right).
    \end{align*}
    For simplicity, we denote $x:=\frac{B}{|A|}$, $y:=\frac{C}{|A|}$ and $z:=\frac{D}{|A|}$. Than the expression becomes as follows:
    \begin{align}
    \frac{\Gamma_{2,p}f(x)}{|A|^{2p-2}}=&\frac{1}{p-1}(1+|x|^{2p-2})+\frac{1}{p-1}(|x|^{p-2}(\pm1)x)-\frac{1}{2p}(|x|^{p}+|x|^{p-2})\notag\\
    &+\left(\frac{1}{2p}|y|^p-\frac{1}{2(p-1)}(|y|^{p-2}(\pm1)y)\right)\notag\\
    &+\left(\frac{1}{2p}|x|^{p-2}|z|^p-\frac{1}{2(p-1)}(|x|^{p-2}|z|^{p-2}xz)\right).\label{eq:gamma2_x_y_z}
    \end{align}
    Without loss of generality, we assume that all the variables $x$, $y$ and $z$ are non-negative, then we can get rid of absolute values in the equation (\ref{eq:gamma2_x_y_z}) and it  becomes as the follow:
    \begin{align}
        \frac{\Gamma_{2,p}f(u)}{|A|^{2p-2}}&=\frac{1}{p-1}(1+x^{2p-2})+\frac{1}{p-1}x^{p-1}-\frac{1}{2p}(x^p+x^{p-2})\label{eq:gamma2_x}\\
        &+\left(\frac{1}{2p}y^p-\frac{1}{2(p-1)}y^{p-1}\right)\label{eq:gamma2_y}\\
        &+\left(\frac{1}{2p}x^{p-2}z^p-\frac{1}{2(p-1)}x^{p-1}z^{p-1}\right)\label{eq:gamma2_z}.
    \end{align}
    Since these three variables $x$, $y$ and $z$ are independent with each other, than we can consider the expressions in the above three lines respectively, and (\ref{eq:gamma2_y}) at first.
    
    Take the term (\ref{eq:gamma2_y}) as a function of $y$ and denote it as: 
    \begin{align}\label{eq:function_g}
    g(y):=\frac{1}{2p}y^p-\frac{1}{2(p-1)}y^{p-1}, \,\,\text{for }y\geq0.
    \end{align}
    Since $g(0)=0$, we than take the derivative and it gives
    \[g'(y)=\frac{1}{2p}py^{p-1}-\frac{1}{2(p-1)}(p-1)y^{p-2}=\frac{1}{2}y^{p-2}(y-1).\]
    Hence, $g'(y)$ is non-positive on the interval $[0,1]$ and non-negative in the interval $[1,+\infty)$. Therefore, $g(y)$ is decreasing in $[0,1]$, increasing in $[1,+\infty)$ and will attains its minimum at $y=1$, that is, 
    \begin{align}\label{eq:g_y_minimum}
    g(y)_{\min}=g(1)=\frac{1}{2p}-\frac{1}{2(p-1)}=-\frac{1}{2p(p-1)}.
    \end{align}

    For the expression (\ref{eq:gamma2_z}), we regard $z$ as a variable and $x$ as a parameter, and we define:
    \[h(z):=\frac{1}{2p}x^{p-2}z^{p}-\frac{1}{2(p-1)}x^{p-1}z^{p-1},\,\,\text{for }x\geq0\,\,\text{and }z\geq0.\]
    We know that $h(0)=0$ and the derivative of $h(z)$ is
    \[h'(z)=\frac{1}{2p}x^{p-2}(pz^{p-1})-\frac{1}{2(p-1)}x^{p-1}(p-1)z^{p-2}=\frac{1}{2}x^{p-2}z^{p-2}(z-x).\]
    It is similar to see that $h(z)$ is decreasing in the interval $[0,x]$ and increasing on the interval $[x,\infty)$. Hence, $h(z)$ attains its minimum at the point $z=x$ and the minimal value is 
    \begin{align}\label{eq:h_z_minumum}
    h(z)_{min}=h(x)=-\frac{1}{2p(p-1)}x^{2p-2}.
    \end{align}
    Next, we replace the terms (\ref{eq:gamma2_y}) and (\ref{eq:gamma2_z}) with their minimum values, respectively. That is,
    \begin{align*}
        \frac{\Gamma_{2,p}f(u)}{|A|^{2p-2}}=&\frac{1}{p-1}(1+x^{2p-2})+\frac{1}{p-1}x^{p-1}-\frac{1}{2p}(x^p+x^{p-2})-\frac{1}{2p(p-1)}-\frac{1}{2p(p-1)}x^{2p-2}.
    \end{align*}
    We reformulate it as the following:
    \begin{align*}
       \frac{\Gamma_{2,p}f(u)}{|A|^{2p-2}}=&\frac{1}{2(p-1)}(1+x^{p-1})^2+\frac{1}{2(p-1)}(1+x^{2p-2})-\frac{1}{2p}(x^p+x^{p-2})\\
       &-\frac{1}{2p(p-1)}-\frac{1}{2p(p-1)}x^{2p-2}\\
       =&\frac{1}{2(p-1)}(1+x^{p-1})^2+\frac{1}{2p}(1+x^{2p-2}-x^p-x^{p-2})\\
       =&\frac{1}{2(p-1)}(1+x^{p-1})^2+\frac{1}{2p}(x^{p-2}-1)(x^{p}-1)\geq 0.
    \end{align*}
    According to the last expression, it is obvious to see that $\Gamma_{2,p}f(u)$ is non-negative for any function $f$ on graphs in the case $p>2$. 
    
    Since $\Gamma_p(f)(u)$ is non-negative for any function $f:V\to\mathbb{R}$ and for any vertex $u\in G$, so is the term $(\Gamma_p(f)(u))^{\frac{2p-2}{2}}$. We can derive that it satisfies the $CD_{p}(\infty,0)$ condition at any vertex $x\in G$ on a path.
\end{proof}

\section{$CD_p(m,\K)$ condition on cycles}
In this section, we study the $CD_p(m,\K)$ condition on cycles.
\begin{proposition}
    For $p>2$, let $G=(V,w,\mu)$ be a cycle $C_d$ with $d\geq 5$. Assume that $\nu(v)=1$ for each vertex $v\in V$ and $w_{uv}=1$ for each edge $u \sim v$, then it satisfies the $CD_p(\infty,0)$ condition at each vertex $u\in V$.
\end{proposition}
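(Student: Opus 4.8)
The plan is to reduce this statement to the already-established Proposition \ref{prop:infinite_path} by observing that $\Gamma_{2,p}f(u)$ depends only on the local structure $B^{inc}_2(u)$, and that for a vertex of a cycle $C_d$ with $d\geq 5$ this structure is isomorphic, as a weighted graph with unit weights and unit measure, to the one around a middle vertex of the infinite path $P_\infty$.

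First I would fix an arbitrary vertex $u\in C_d$ and label its two neighbours $v_1,v_2$ together with their further neighbours $z_1,z_2$ (the neighbours other than $u$). Since $d\geq 5$, the five vertices $u,v_1,v_2,z_1,z_2$ are pairwise distinct; in particular $z_1\neq z_2$, which is exactly where the hypothesis $d\geq 5$ rather than $d\geq 4$ enters (had $d=4$ we would instead find $z_1=z_2$, and the path structure would fail). I would also note that the weighted data match: each edge has weight $1$ and each vertex measure $1$, as in the path propositions.

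Next I would argue that $\Gamma_{2,p}f(u)$ is insensitive to the spherical edges on $S_2(u)$. Inspecting Definition \ref{def:gamma2p}, the value $\Gamma_{2,p}f(u)$ is built from $\Gamma_p f(v)$ and $\Delta_p f(v)$ for the neighbours $v\in\{v_1,v_2\}$ of $u$, together with $\Gamma_p f(u)$ and $\Delta_p f(u)$. Each quantity $\Gamma_p f(v_i)$ and $\Delta_p f(v_i)$ involves only the edges incident to $v_i$, namely $v_i u$ and $v_i z_i$; none of these is a spherical edge on $S_2(u)$, since such an edge would join two vertices both at distance $2$ from $u$. Hence even in the borderline case $d=5$, where $z_1$ and $z_2$ happen to be adjacent, that edge plays no role whatsoever. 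Consequently $\Gamma_{2,p}f(u)$ depends only on $B^{inc}_2(u)$, which is precisely the path $z_1\sim v_1\sim u\sim v_2\sim z_2$.

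Finally, setting $A=f(v_1)-f(u)$, $B=f(v_2)-f(u)$, $C=f(z_1)-f(v_1)$, $D=f(z_2)-f(v_2)$ as before, the expression for $\Gamma_{2,p}f(u)$ is given verbatim by equation (\ref{eq:p_gamma2_on_path}). The computation carried out in the proof of Proposition \ref{prop:infinite_path} then applies without any change and yields $\Gamma_{2,p}f(u)\geq 0$ for every $f$ when $p>2$; since $\Gamma_p f(u)\geq 0$, the term $(\Gamma_p f(u))^{\frac{2p-2}{p}}$ is non-negative, so the $CD_p(\infty,0)$ inequality holds at $u$, and as $u$ was arbitrary it holds at every vertex. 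The only point genuinely requiring care is the verification in the previous paragraph that the spherical edge present when $d=5$ does not enter the formula for $\Gamma_{2,p}f(u)$; I would flag this as the main subtlety of the argument rather than a real obstacle, the underlying inequality having already been proved for the path.
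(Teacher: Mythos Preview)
Your argument is correct and follows exactly the paper's approach: both reduce to Proposition \ref{prop:infinite_path} by observing that for $d\ge 5$ the incomplete $2$-ball $B^{inc}_2(u)$ in $C_d$ coincides with that of a middle vertex of a long path. Your write-up is simply more explicit about why spherical edges on $S_2(u)$ (in particular the edge $z_1z_2$ when $d=5$) do not enter the formula for $\Gamma_{2,p}f(u)$, but the strategy is identical.
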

\begin{proof}
    For a cycle $C_d$ with $d\geq 5$ and any vertex $u\in V$, the incomplete local structure $B^{inc}_2(u)$ of $u\in V$ is the same as that in a path, as shown in Figure \ref{fig:a_vertex_on_general_path}. Therefore, we can derive the conclusion directly from Proposition \ref{prop:infinite_path}. 
\end{proof}
\begin{proposition}
    For $p>2$, let $G=(V,w,\mu)$ be a $4$-cycle $C_4$. Assume that $\mu(v)=1$ for each vertex $v\in V$ and $w_{uv}=1$ for each edge $u\sim v$, then it satisfies the $CD_p(\infty,0)$ condition at each vertex $u\in V$.
\end{proposition}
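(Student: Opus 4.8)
The plan is to show that the local computation of $\Gamma_{2,p}f(u)$ at any vertex of $C_4$ is governed by exactly the same expression in $A,B,C,D$ as in the general path case, so that the proposition follows immediately from Proposition \ref{prop:infinite_path}.

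First, by the symmetry of $C_4$ it suffices to verify the condition at a single vertex $u$. I would label the four vertices as $u,v_1,v_2,w$ with edges $u\sim v_1$, $u\sim v_2$, $v_1\sim w$ and $v_2\sim w$, and set $A:=f(v_1)-f(u)$, $B:=f(v_2)-f(u)$, $C:=f(w)-f(v_1)$ and $D:=f(w)-f(v_2)$, matching the notation used on paths. The essential structural point is that, in contrast to a path, the two vertices at distance $2$ from $u$ coincide (here $z_1=z_2=w$), which forces the single linear constraint $A+C=B+D$; I would record this but then argue that it plays no role.

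Next I would verify that $\Gamma_{2,p}f(u)$ is still given by formula (\ref{eq:p_gamma2_on_path}). The key observation is that $\Gamma_{2,p}f(u)$ only involves $\Gamma_pf$ and $\Delta_pf$ evaluated at $u$ and at its neighbours $v_1,v_2$, and each of these quantities depends only on the values of $f$ at $u,v_1,v_2,w$; the neighbours of $w$ never enter the computation at $u$. Since $v_1$ has the two neighbours $u,w$ (and likewise $v_2$ has $u,w$), exactly as $v_1,v_2$ have degree two on the path, one computes $\Gamma_pf(v_1)=\frac{p-1}{2}(|A|^p+|C|^p)$, $\Gamma_pf(v_2)=\frac{p-1}{2}(|B|^p+|D|^p)$, $\Delta_pf(v_1)=-|A|^{p-2}A+|C|^{p-2}C$ and $\Delta_pf(v_2)=-|B|^{p-2}B+|D|^{p-2}D$, which are identical to the corresponding path expressions. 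Feeding these into Definition \ref{def:gamma2p} reproduces (\ref{eq:p_gamma2_on_path}) verbatim.

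Finally, I would invoke Proposition \ref{prop:infinite_path}, whose proof establishes that the right-hand side of (\ref{eq:p_gamma2_on_path}) is non-negative for all real $A,B,C,D$ when $p>2$. In particular it is non-negative on the constraint set $\{A+C=B+D\}$ realised by functions on $C_4$, so $\Gamma_{2,p}f(u)\ge 0$ for every $f$. Combined with $(\Gamma_pf(u))^{(2p-2)/p}\ge 0$, this yields the $CD_p(\infty,0)$ condition at $u$, and hence at every vertex by symmetry. The only point requiring care is the locality claim in the second step — that collapsing $z_1$ and $z_2$ into the single vertex $w$ does not alter the formula — but this is immediate once one checks that the neighbours of $w$ do not appear anywhere in the expansion of $\Gamma_{2,p}f(u)$.
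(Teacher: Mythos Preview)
Your proposal is correct and follows essentially the same approach as the paper: compute $\Gamma_{2,p}f(u)$ on $C_4$, observe that it coincides with the path expression \eqref{eq:p_gamma2_on_path} in $A,B,C,D$, note the additional linear constraint $A+C=B+D$, and then invoke Proposition~\ref{prop:infinite_path} since the constrained set is a subset of all real $(A,B,C,D)$. Your write-up is slightly more explicit about the locality check (that the neighbours of the distance-$2$ vertex do not appear), but the argument is the same.
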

\begin{proof}
    For any vertex $x\in V$, the local structure $B^{inc}_2(u)$ is shown in Figure \ref{fig:c4}.
    
    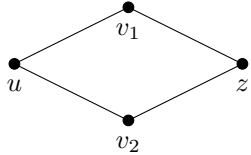
\begin{figure}[!htp]
\centering
\tikzset{vertex/.style={circle, draw, fill=black!20, inner sep=0pt, minimum width=4pt}}
\begin{tikzpicture}[scale=3.0]

\draw (0,0.25) -- (-0.5,0) node[midway, below, black]{};
\draw (0,0.25) -- (0.5,0) node[midway, above, black]{};
\draw (0,-0.25) -- (-0.5,0) node[midway, below, black]{};
\draw (0,-0.25) -- (0.5,0) node[midway, above, black]{};

\node at (0,0.25) [vertex, label={[label distance=0mm]270: \small $v_1$}, fill=black] {};
\node at (-0.5,0) [vertex, label={[label distance=0mm]270: \small $u$}, fill=black] {};
\node at (0.5,0) [vertex, label={[label distance=0mm]270: \small $z$}, fill=black] {};
\node at (0,-0.25) [vertex, label={[label distance=0mm]270: \small $v_2$}, fill=black] {};

\end{tikzpicture}
\caption{The local structure at $x$ in cycle graph $G=C_4$.}
\label{fig:c4}
\end{figure}
For any function $f:V\to\mathbb{R}$, we compute the terms $\Gamma_p(f)(u)$ and $\Gamma_{2,p}(f)(u)$, respectively.
\begin{align*}
    \Gamma_p(f)(u)=\frac{p-1}{2}(|f(v_1)-f(u)|^p+|f(v_2)-f(u)|^p)\geq 0.
\end{align*}
And similarly, we use the notation as follows:
$$A:=f(v_1)-f(u),\,\,B:=f(v_2)-f(u),\,\,C:=f(z)-f(v_1),\text{ and }D:=f(z)-f(v_2).$$
Then
\begin{align}
    \Gamma_{2,p}(f)(u)=&\frac{1}{2p}|A|^{p-2}|C|^p-\frac{1}{2p}|A|^{p-2}|B|^p+\frac{1}{2p}|B|^{p-2}|D|^P-\frac{1}{2p}|B|^{p-2}|A|^p\notag\\
    &+\frac{1}{p-1}|A|^{2p-2}-\frac{1}{2(p-1)}|A|^{p-2}A|C|^{p-2}C+\frac{1}{p-1}|A|^{p-2}A|B|^{p-2}B\notag\\
    &+\frac{1}{p-1}|B|^{2p-2}-\frac{1}{2(p-1)}|B|^{p-2}B|D|^{p-2}D.\label{eq:gamma2p_cycle4}
\end{align}
It is obvious that the formula (\ref{eq:gamma2p_cycle4}) of $\Gamma_{2,p}(f)(x)$ on $C_4$ is precisely the same as the formula (\ref{eq:gamma2p_path}) on a long path. However, notice that in the case of $C_4$, the four variables $A$, $B$, $C$ and $D$ are no longer independent with each other. More specifically, they satisfy the following equation:
\begin{equation*}
  S_1:=\{A,B,C,D| C-B=f(z)-f(y_1)-f(y_2)+f(x)=D-A.\}
\end{equation*}
Since now the condition $S_1$ is only a subset of 
$$S:=\{A,B,C,D|A,B,C,D\in\mathbb{R}\text{ are arbitrary.}\},$$
and in the latter case $\Gamma_{2,p}(f)(u)$ is non-negative, we can derive directly that 
$$\Gamma_{2,p}(f)(u)\geq 0.$$
Recall that $\Gamma_p(f)(u)$ is always non-negative, we can get the conclusion that it satisfies $CD_p(\infty,0)$ condition at any vertex in the cycle $C_4$.
\end{proof}
\begin{remark}
    For $1<p<2$, let $G=(V,w,\mu)$ be a triangle $C_4$. Assume that $\mu \equiv 1$ and $w\equiv 1$, then it does NOT satisfy the $CD_p(\infty, 0)$ condition. This example is shown in \cite[Remark 3.1]{XSW24}. Moreover, the $p$-curvature at $u\in C_4$ will approach $-\infty$.
\end{remark}
Finally, we deal with the smallest cycle $C_3$.
\begin{proposition}
    For $p>2$, let $G=(V,w,\mu)$ be a triangle $C_3$. Assume that $\mu(x)=1$ for each vertex $v\in V$ and $w_{uv}=1$ for each edge $u \sim v$, then it satisfies the $CD_p(\infty,0)$ condition at each vertex $u\in V$.
\end{proposition}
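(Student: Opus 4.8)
The plan is to reduce the statement to the path computation of Proposition \ref{prop:infinite_path} by exactly the subset argument that settled the case $C_4$. Fix a vertex $u\in C_3$ and let $v_1,v_2$ be its two neighbours. Since $C_3$ is a triangle we also have $v_1\sim v_2$, and no vertex lies at distance $2$ from $u$, so $S_2(u)=\emptyset$ and the incomplete $2$-ball $B_2^{inc}(u)$ is the whole triangle (in particular the edge $v_1\sim v_2$ is retained). As before I set $A:=f(v_1)-f(u)$ and $B:=f(v_2)-f(u)$.

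First I would observe that $\Gamma_{2,p}f(u)$ depends only on the values of $\Gamma_pf$ and $\Delta_pf$ at $u,v_1,v_2$, and each of these in turn depends only on the restriction of $f$ to the $2$-ball. In the general-path notation leading to (\ref{eq:p_gamma2_on_path}), the bookkeeping quantities $C$ and $D$ record the differences of $f$ across the second edge incident to $v_1$ and to $v_2$, respectively. In $C_3$ the second neighbour of $v_1$ is $v_2$ and the second neighbour of $v_2$ is $v_1$, so $C=f(v_2)-f(v_1)=B-A$ and $D=f(v_1)-f(v_2)=A-B=-C$. Because every vertex of $C_3$ has degree $2$, the operators $\Gamma_pf(v_i)$ and $\Delta_pf(v_i)$ are sums over exactly two neighbours, precisely as in a long path; hence the formula (\ref{eq:gamma2p_path}) for $\Gamma_{2,p}f(u)$ carries over verbatim, now evaluated at $(A,B,C,D)=(A,B,B-A,A-B)$. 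With this identification the conclusion is immediate: Proposition \ref{prop:infinite_path} (valid for $p>2$) shows that the expression (\ref{eq:gamma2p_path}) is non-negative for all real $A,B,C,D$ — every such quadruple is realised by some function on $P_\infty$ — and the data produced by a function on $C_3$ form the subset $\{(A,B,B-A,A-B):A,B\in\mathbb{R}\}$ of this domain. Therefore $\Gamma_{2,p}f(u)\ge 0$ for every $f$. Since $\Gamma_pf(u)\ge 0$ always, the term $(\Gamma_pf(u))^{(2p-2)/p}$ is non-negative, and with $m=\infty$ the defining inequality of $CD_p(\infty,0)$ reduces to $\Gamma_{2,p}f(u)\ge 0$, which is exactly what we have shown.

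The only point requiring genuine care — the analogue of the subtlety flagged in the $C_4$ argument — is the claim that the path formula (\ref{eq:gamma2p_path}) truly applies to the triangle. Concretely I would verify that the single edge $v_1\sim v_2$ is correctly accounted for: it contributes to both $\Gamma_pf$, $\Delta_pf$ at $v_1$ and at $v_2$, producing the two variables $C$ and $D$ with $D=-C$, with nothing double-counted or omitted. I expect this short verification to be the main obstacle; once it is settled, the subset argument against Proposition \ref{prop:infinite_path} closes the proof just as for $C_4$. It is worth noting the structural difference from $C_4$: there the single relation $C-B=D-A$ cut out a hyperplane in $\mathbb{R}^4$, whereas here $C$ and $D$ are both entirely determined by $A$ and $B$, so the admissible set is only a two-dimensional plane — but a subset all the same, which is all the argument needs.
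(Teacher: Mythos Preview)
Your proposal is correct and follows essentially the same approach as the paper: both identify that in $C_3$ the second neighbour of $v_1$ is $v_2$ (and vice versa), forcing $C=B-A$ and $D=-C$, so that the triangle's $\Gamma_{2,p}f(u)$ coincides with the path formula (\ref{eq:gamma2p_path}) evaluated on a constrained subset of $(A,B,C,D)$-space, and then invoke Proposition~\ref{prop:infinite_path} via the subset argument already used for $C_4$. Your observation that the admissible set here is only a two-dimensional plane (rather than a hyperplane as in $C_4$) is a nice clarification the paper does not spell out.
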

\begin{proof}
    For any vertex $u\in V$, the local structure $B^{inc}_2(u)$ is shown in Figure \ref{fig:c3}.

\begin{figure}[!htp]
\centering
\tikzset{vertex/.style={circle, draw, fill=black!20, inner sep=0pt, minimum width=4pt}}
\begin{tikzpicture}[scale=3.0]

\draw (0,0.25) -- (-0.5,0) node[midway, below, black]{};
\draw (0,0.25) -- (0,-0.25) node[midway, above, black]{};
\draw (0,-0.25) -- (-0.5,0) node[midway, below, black]{};

\node at (0,0.25) [vertex, label={[label distance=0mm]0: \small $v$}, fill=black] {};
\node at (-0.5,0) [vertex, label={[label distance=0mm]270: \small $u$}, fill=black] {};
\node at (0,-0.25) [vertex, label={[label distance=0mm]0: \small $z$}, fill=black] {};

\end{tikzpicture}
\caption{The local structure at $x$ in triangle $G=C_3$.}
\label{fig:c3}
\end{figure}
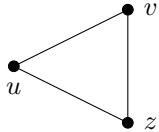

For any function $f:V\to \mathbb{R}$, we have
\begin{align*}
    \Gamma_p(f)(u)=\frac{p-1}{2}(|f(v)-f(u)|^p+|f(z)-f(u)|^p)\geq 0.
\end{align*}
And we denote:
$$A:=f(v)-f(u),\,\,B:=f(z)-f(u),\text{ and }C:=f(z)-f(v),$$
\begin{align}
    \Gamma_{2,p}(f)(u)=&\frac{1}{2p}|A|^{p-2}|C|^p+\frac{1}{2p}|B|^{p-2}|C|^p-\frac{1}{2p}|A|^{p-2}|B|^{p}-\frac{1}{2p}|A|^p|B|^{p-2}\notag\\
    &+\frac{1}{p-1}|A|^{2p-2}+\frac{1}{p-1}|B|^{2p-2}+\frac{1}{p-1}|A|^{p-2}A|B|^{p-2}B\notag\\
    &-\frac{1}{2(p-1)}|A|^{p-2}A|C|^{p-2}C+\frac{1}{2(p-1)}|B|^{p-2}B|C|^{p-2}C.\label{eq:gamma2p_c3}
\end{align}
It is direct to check that the above formula (\ref{eq:gamma2p_c3}) is same as the formula (\ref{eq:gamma2p_path}) by choosing $D=-C$. And in this case, there is one more relation between these variables. Specifically, the conditions now for the variables are:
$$S_2=\{A,B,C,D|D=-C,\text{ and }B-A=C\}.$$
The condition $S_2$ is again a subset of $S=\{A,B,C,D\}$. Therefore, we can derive that in a triangle, 
$$\Gamma_{2,p}(f)(u)\geq 0$$
for any function $f$ and it satisfies the $CD_p(\infty,0)$ condition on a triangle $C_3$, with the non-negativity of the term $\Gamma_p(f)(u)$. We've finished the proof.
\end{proof}

\section{$CD_{p}(m, \K)$ condition on a star graph}
In this section, we consider the $CD_{p}(m,\K)$ condition on a star graph $S_{\ell+1}$, here $\ell+1$ is the total number of leaves,  as shown in Figure \ref{fig:a_star_graph} below.

\begin{figure}[!htp]
\centering
\tikzset{vertex/.style={circle, draw, fill=black!20, inner sep=0pt, minimum width=4pt}}
\begin{tikzpicture}[scale=3.0]

\draw (0,0) -- (-0.5,0) node[midway, below, black]{};
\draw (0,0) -- (0.5,0.5) node[midway, above, black]{};
\draw (0,0) -- (0.5,0.25) node[midway, above, black]{};
\draw (0,0) -- (0.5,-0.25) node[midway, above, black]{};
\draw (0,0) -- (0.5,-0.5) node[midway, above, black]{};

\node at (0,0) [vertex, label={[label distance=0mm]270: \small $v$}, fill=black] {};
\node at (-0.5,0) [vertex, label={[label distance=0mm]270: \small $u$}, fill=black] {};
\node at (0.5,0.5) [vertex, label={[label distance=0mm]0: \small $z_1$}, fill=black] {};
\node at (0.5,0.25) [vertex, label={[label distance=0mm]0: \small $z_2$}, fill=black] {};
\node at (0.5,-0.25) [vertex, label={[label distance=0mm]0: \small $z_{\ell-1}$}, fill=black] {};
\node at (0.5,-0.5) [vertex, label={[label distance=0mm]0: \small $z_\ell$}, fill=black] {};
\node at (0.5,0) [vertex, label={[label distance=0mm]0: \small $\vdots$}, fill=black] {};

\end{tikzpicture}
\caption{A general star graph $G=S_{\ell+1}$}
\label{fig:a_star_graph}
\end{figure}
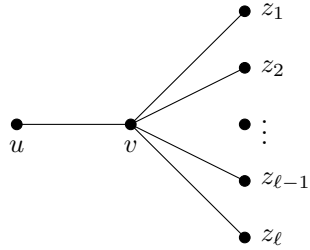
\begin{proposition}
    In the case of $p\geq 2$ and dimension $m=\infty$, for a star graph $G=S_{\ell+1}$, we assume that the vertex measure $\mu(u)=1$ for each vertex $u\in G$ and the edge measure $w_{uv}=1$ for each edge $u\sim v$. Then the $p$-curvature at a leaf vertex $u\in S_{\ell+1}$ is:
    \[\K_{p,u,G}(\infty)=\frac{4}{(p-1)^{2}}\left(\frac{p-1}{2}\right)^{\frac{2}{p}}\left(\frac{1}{p-1}-\frac{\ell-1}{2p(p-1)}\right).\]
   In addition, this $p$-curvature $\K_{p,u,G}(\infty)$ is decreasing linearly with respect to $\ell$. Moreover, if $\ell > 2p + 1$, we have that the $p$-curvature $\K_{p,u,G}(\infty)$ is negative.
\end{proposition}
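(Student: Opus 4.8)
The plan is to read off $\K_{p,u,G}(\infty)$ as the optimal constant in the $CD_p(\infty,\K)$ inequality. Since $m=\infty$ the inequality becomes $\Gamma_{2,p}f(u)\ge\K\,(\Gamma_p f(u))^{\frac{2p-2}{p}}$, so
\[
\K_{p,u,G}(\infty)=\inf_{f}\frac{\Gamma_{2,p}f(u)}{(\Gamma_p f(u))^{\frac{2p-2}{p}}},
\]
the infimum being taken over all $f$ with $f(v)\neq f(u)$ (when $f(v)=f(u)$ both sides vanish for $p\ge 2$ and impose no constraint on $\K$). First I would record the local data at the leaf $u$: its only neighbour is the centre $v$, whose remaining neighbours are the other leaves. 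Setting $A:=f(v)-f(u)$ and $C_i:=f(z_i)-f(v)$, one has $\Gamma_p f(u)=\frac{p-1}{2}|A|^p$ and $\Delta_p f(u)=|A|^{p-2}A$; substituting $\Gamma_p f(v)-\Gamma_p f(u)=\frac{p-1}{2}\sum_i|C_i|^p$ and $\Delta_p f(v)-\Delta_p f(u)=-2|A|^{p-2}A+\sum_i|C_i|^{p-2}C_i$ into Definition~\ref{def:gamma2p} yields
\[
\Gamma_{2,p}f(u)=\frac{1}{p-1}|A|^{2p-2}+\frac{1}{2p}|A|^{p-2}\sum_i|C_i|^p-\frac{1}{2(p-1)}|A|^{p-2}A\sum_i|C_i|^{p-2}C_i.
\]

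Next, using that numerator and denominator are both homogeneous of degree $2p-2$ in the increments, I would divide by $|A|^{2p-2}$ and put $x_i:=C_i/|A|$, normalizing the sign of $A$ to be positive exactly as in the leaf-of-path computation. The normalized ratio becomes
\[
\frac{1}{p-1}+\sum_i\Big(\tfrac{1}{2p}|x_i|^p-\tfrac{1}{2(p-1)}|x_i|^{p-2}x_i\Big),
\]
a constant plus independent copies of the nonconstant part of the single-variable function $g$ analyzed in Proposition~\ref{prop:leaf_pathn}. Because the $x_i$ decouple, the infimum is attained by minimizing each summand separately; the same elementary calculus (derivative $\tfrac12 x^{p-2}(x-1)$, interior minimizer $x_i=1$) gives each minimum equal to $-\frac{1}{2p(p-1)}$. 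Summing over the $\ell-1$ pendant neighbours of $v$ distinct from $u$ produces $\frac{1}{p-1}-\frac{\ell-1}{2p(p-1)}$ for the minimal value of $\Gamma_{2,p}f(u)/|A|^{2p-2}$.

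Finally I would translate this back into the curvature by dividing by the factor coming from the denominator, $(\Gamma_p f(u))^{\frac{2p-2}{p}}=\big(\tfrac{p-1}{2}\big)^{\frac{2p-2}{p}}|A|^{2p-2}$; writing $\tfrac{2p-2}{p}=2-\tfrac{2}{p}$ gives $\big(\tfrac{p-1}{2}\big)^{-\frac{2p-2}{p}}=\frac{4}{(p-1)^2}\big(\tfrac{p-1}{2}\big)^{\frac{2}{p}}$, which reproduces the stated prefactor and hence the claimed formula for $\K_{p,u,G}(\infty)$. The two remaining assertions then follow at once: the expression is affine in $\ell$ with strictly negative slope $-\frac{1}{2p(p-1)}\cdot\frac{4}{(p-1)^2}\big(\tfrac{p-1}{2}\big)^{\frac{2}{p}}$, so it decreases linearly in $\ell$; and the bracket $\frac{1}{p-1}-\frac{\ell-1}{2p(p-1)}$ is negative exactly when $\frac{\ell-1}{2p}>1$, i.e. $\ell>2p+1$, which forces $\K_{p,u,G}(\infty)<0$.

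I expect the main obstacle to be the bookkeeping that certifies the decoupled minimization really computes the global infimum of the ratio: namely justifying the normalization by $|A|^{2p-2}$ (treating $A=0$ separately and checking that the two homogeneity degrees agree) and confirming that it is precisely $p\ge 2$ which keeps $|A|^{p-2}$ finite and forces each summand to attain its interior minimum at $x_i=1$, rather than the unbounded-below behaviour encountered for $1<p<2$ on $P_3$. As a consistency check I would finally verify that at $p=2$ the formula collapses to the classical Bakry-\'Emery leaf curvature of the star, in agreement with the Graph Curvature Calculator.
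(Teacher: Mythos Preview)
Your proposal is correct and follows essentially the same route as the paper: compute $\Gamma_p f(u)$ and $\Gamma_{2,p}f(u)$ at the leaf in terms of $A=f(v)-f(u)$ and the increments $C_i=f(z_i)-f(v)$, form the ratio defining $\K_{p,u,G}(\infty)$, exploit $(2p-2)$-homogeneity to divide by $|A|^{2p-2}$, and then minimize the resulting decoupled sum of copies of the single-variable function $g(t)=\tfrac{1}{2p}|t|^{p}-\tfrac{1}{2(p-1)}|t|^{p-2}t$, each attaining its minimum $-\tfrac{1}{2p(p-1)}$ at $t=1$. The paper cites the analysis of $g$ carried out in the proof of Proposition~\ref{prop:infinite_path} (equations \eqref{eq:function_g}--\eqref{eq:g_y_minimum}) rather than Proposition~\ref{prop:leaf_pathn}, but it is the same function and the same calculus; your rewriting of the prefactor $\big(\tfrac{p-1}{2}\big)^{-(2p-2)/p}=\tfrac{4}{(p-1)^2}\big(\tfrac{p-1}{2}\big)^{2/p}$ and your derivation of the linear-in-$\ell$ and $\ell>2p+1$ claims match the paper exactly.
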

\begin{proof}
    For any function $f:V\rightarrow \mathbb{R}$, we can calculate $\Gamma_pf(u)$ and $\Gamma_{2,p}f(u)$ respectively. For simplicity, we first denote
    \[A:=f(v)-f(u),\,\,B_{1}:=f(z_1)-f(v),\,\,\cdots,B_{\ell}:=f(z_{\ell})-f(v).\]
    Then, we have
    \[\Gamma_{p}f(u)=\frac{p-1}{2}\sum_{v:v\sim u}|f(v)-f(u)|^p=\frac{p-1}{2}|f(v)-f(u)|^p=\frac{p-1}{2}|A|^{p}.\]
    And
    \begin{align*}
        \Gamma_{2,p}f(u)=&\frac{1}{2p}\left(\sum_{i:i=1}^{\ell-1}|f(v)-f(u)|^{p-2}|f(z_{i}-f(v))|^p\right)+\frac{1}{p-1}(|f(v)-f(u)|^{2p-2})\\
        &-\frac{1}{2(p-1)}\sum_{i:i=1}^{\ell-1}|f(v)-f(u)|^{p-2}|f(z_i)-f(v)|^{p-2}|f(v)-f(u)||f(z_i)-f(u)|\\
        =&\frac{1}{p-1}|A|^{2p-2}+\frac{1}{2p}\sum_{i:i=1}^{\ell-1}|A|^{p-2}|B_i|^{p}-\frac{1}{2(p-1)}\sum_{i:i=1}^{\ell-1}|A|^{p-2}|B_i|^{p-2}AB.
    \end{align*}
    In the case of dimension $N=\infty$, $p$-curvature is the largest constant $\K_{p,u,G}(\infty)$ such that for any function $f$, the following inequality holds true:
    \[\Gamma_{2,p}f(u)\geq \K_{p,u}(\infty) (\Gamma_{p}f(u))^{\frac{2p-2}{p}}.\]
    It is equivalent to 
    \[K_{p,u}(\infty)=\inf_{f:V\rightarrow \mathbb{R},f\not\equiv c}\frac{\Gamma_{2,p}f(u)}{(\Gamma_{p}f(u))^{\frac{2p-2}{p}}}.\]
    We can derive
    \begin{align*}
        \K_{p,u}(\infty)=&\inf_{f:V\rightarrow \mathbb{R},f\not\equiv c}\frac{4}{(p-1)^2}\left(\frac{p-1}{2}\right)^{\frac{2}{p}}\left(\frac{1}{p-1}+\sum_{i:i=1}^{\ell-1}\left(\frac{1}{2p}\frac{|B_i|^{p}}{|A|^{p}}-\frac{1}{2(p-1)}\frac{|B_i|^{p-2}}{|A|^{p-2}}\frac{B_i}{|A|}\right)\right).
    \end{align*}
    In a similar way, we denote $y_i:=\frac{B_i}{|A|}$ for simplicity and without loss of generality, we assume $y_i\geq 0$ for all $i=1,\cdots,l.$ That is, 
      \begin{align*}
        \K_{p,u}(\infty)=&\inf_{f:V\rightarrow \mathbb{R},f\not\equiv c}\frac{4}{(p-1)^2}\left(\frac{p-1}{2}\right)^{\frac{2}{p}}\left(\frac{1}{p-1}+\sum_{i:i=1}^{\ell-1}\left(\frac{1}{2p}y_i^p-\frac{1}{2(p-1)}y_i^{p-1}\right)\right).
    \end{align*}
    For each term in the summation, since these variables $\{y_i\}_i$ are independent with each other, we can regard each term as a function $g_i$ of $y_{i}$, $(y_i\geq 0)$. According to the arguments (\ref{eq:function_g})-(\ref{eq:g_y_minimum}) above, we can derive the minimum of $\K_{p,x}(\infty)$ as:
    \[\K_{p,u}(\infty)=\frac{4}{(p-1)^2}\left(\frac{p-1}{2}\right)^{\frac{2}{p}}\left(\frac{1}{p-1}-\frac{\ell-1}{2p(p-1)}\right).\]
    In addition, the $p$-curvature $\K_{p,u,G}(\infty)$ at a leaf $u\in G$ decreases linearly with respect to the number of vertices $\ell+1$.
\end{proof}
\begin{remark}
    In the case of $p=2$, the above formula becomes
    \begin{align*}
        \K_{u}(\infty)=2-\frac{l-1}{2},
    \end{align*}
    which precisely corresponds to the curvature of a leaf vertex in a star graph, as shown in Graph Curvature Calculator.
\end{remark}

\section{$p$-curvature on Cartesian product}
In this section, we focus on the $p$-curvature on Cartesian product. First we give the definition of Cartesian product of two graphs.
\begin{definition}
    Let $G_1=(V_1,E_1)$ and $G=(V_2,E_2)$ be two graphs. The Cartesian product, denoted as $G_1\times G_2$, of $G_1$ and $G_2$ are defined as follows: The vertex set is 
    $$V_{G_1\times G_2}:=\{(x,y)|x\in V_1,y\in V_2\}=V_1\times V_2,$$
    and the edge set is defined as:
    $$E_{G_1\times G_2}:=\{(x_1,x_2)\sim(y_1,y_2)|\text{ if }x_1=y_1\text{ and }x_2\sim y_2,\text{ or }x_1\sim x_2 \text{ and }y_1=y_2\}.$$
\end{definition}
Suppose $x\in V_1$ and $y\in V_2$, we denote the neighbors of $x$ and $y$ are:
$$S_1(x)=\{x_i\}_{i=1}^{d_x},\,\,\,S_1(y)=\{y_{k}\}_{k=1}^{d_y},$$
respectively. We also denote vertices in $2$-spheres of $x$ and $y$ as:
$$S_2(x)=\{x_{ij}\}_{ij},\,\,\,S_2(y)=\{y_{kl}\}_{kl}.$$
We denote the local structure of incomplete $2$-ball as $B_2^{inc}(x,y)$, which is obtained by deleting the edges in $2$-sphere in the induced subgraph of $B_2(x,y)$. The following picture Figure \ref{fig:cartesian_product} shows the local structure of $B^{inc}_2(x,y)$:
\begin{figure}[!htp]
\centering
\tikzset{vertex/.style={circle, draw, fill=black!20, inner sep=0pt, minimum width=4pt}}
\begin{tikzpicture}[scale=3.0]

\draw (0,0) -- (1,0.5) node[midway, below, black]{};
\draw (0,0) -- (1,-0.5) node[midway, above, black]{};
\draw (1,0.5) -- (2,0) node[midway, above, black]{};
\draw (1,-0.5) -- (2,0) node[midway, above, black]{};
\draw (1,0.5) -- (2,1) node[midway, above, black]{};
\draw (1,-0.5) -- (2,-1) node[midway, above, black]{};

\node at (0,0) [vertex, label={[label distance=0mm]270: \small $(x,y)$}, fill=black] {};
\node at (1,0.5) [vertex, label={[label distance=0mm]110: \small $\{x_i,y\}_i$}, fill=black] {};
\node at (1,-0.5) [vertex, label={[label distance=0mm]250: \small $\{x,y_k\}_k$}, fill=black] {};
\node at (2,1) [vertex, label={[label distance=0mm]0: \small $\{x_{ij},y\}_{ij}$}, fill=black] {};
\node at (2,0) [vertex, label={[label distance=0mm]0: \small $\{x_i,y_k\}_{i,k}$}, fill=black] {};
\node at (2,-1) [vertex, label={[label distance=0mm]0: \small $\{x,y_{kl}\}_{kl}$}, fill=black] {};

\end{tikzpicture}
\caption{The Cartesian product $G_1\times G_2$ of $G_1$ and $G_2$}
\label{fig:cartesian_product}
\end{figure}
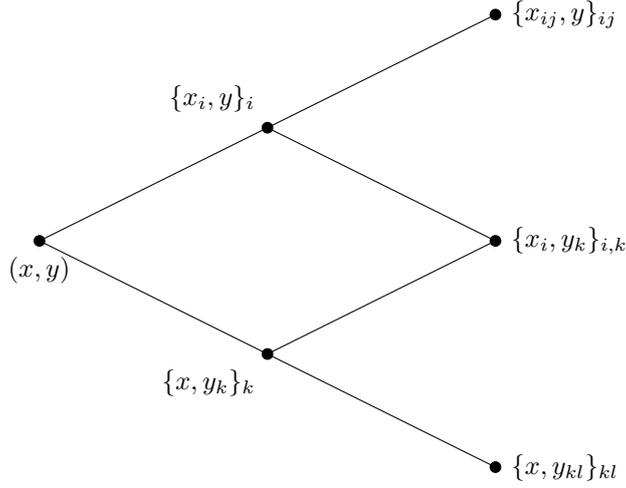

Here comes our main result.
\begin{proposition}\label{prop:negative_pcurvature}
  In the case of $p>2$, suppose $G_1\times G_2$ is the Cartesian product of two graphs $G_1=(V_1,E_1)$ and $G_2=(V_2,E_2)$. Assume that edge weight satisfies $w\equiv1$ and vertex measure $\mu\equiv 1$. Then there exists a function $f:V_1\times V_2 \to \mathbb{R}$, such that the following inequality holds true:
  \begin{align}\label{eq:cartesian_product}
  \Gamma_{2,p}(f)(x,y)< \Gamma_{2,p}(f^x)(y)+\Gamma_{2,p}(f_y)(x).
  \end{align}
  Here we write $f^x(\cdot):=f(x,\cdot)$ as a function on $V_2$, for a fixed $x\in V_1$, and similarly, $f_y(\cdot):=f(\cdot,y)$ as a function on $V_1$, for some fixed vertex $y\in V_2$.
\end{proposition}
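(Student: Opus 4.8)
The plan is to exhibit a single explicit function $f$ that is supported near $(x,y)$, chosen so that the right-hand side of \eqref{eq:cartesian_product} collapses to one already-understood quantity while the left-hand side is driven strictly below it. Throughout I assume $x$ and $y$ are non-isolated, so that we may fix neighbors $x_1\sim x$ in $G_1$ and $y_1\sim y$ in $G_2$; if one factor is trivial at the base point, the roles of $G_1$ and $G_2$ are simply exchanged by the symmetry of \eqref{eq:cartesian_product}. First I would arrange the right-hand side to be cheap. Taking $f\equiv 0$ on the whole slice $\{x\}\times V_2$ forces $f^x\equiv 0$, hence $\Gamma_{2,p}(f^x)(y)=0$. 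I would then design $f_y$, the restriction to $V_1\times\{y\}$, to be the "leaf spike" on $G_1$ that equals a constant $b$ at the single neighbor $x_1$ and $0$ at $x$ and all other nearby vertices; its value $\Gamma_{2,p}(f_y)(x)$ is then read off directly from Definition \ref{def:gamma2p} as a ($\deg_{G_1}(x_1)$-dependent) multiple of $|b|^{2p-2}$.

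The heart of the construction is the choice of $f$ on the product. I would set $f\equiv b$ on $\{x_1\}\times(\{y\}\cup S_1(y))$ except for a single spike $f(x_1,y_1)=c$, and $f\equiv 0$ elsewhere. The reason for \emph{flattening} $f$ to the constant $b$ along the $x_1$-slice, rather than letting it drop back to $0$ at the vertices $(x_1,y_k)$ with $k\neq 1$, is that this makes every difference $f(x_1,y_k)-f(x_1,y)$ vanish for $k\neq 1$, which is exactly what suppresses the contribution of $\deg_{G_2}(y)$ to $\Gamma_p f(x_1,y)$ and to $\Delta_p f(x_1,y)$. Since $p>2$ gives $|0|^{p-2}=0$, the only neighbor of $(x,y)$ at which $f\neq 0$ is $(x_1,y)$, so it is the sole contributor to the first sum in $\Gamma_{2,p}f(x,y)$ and to $\Gamma_{p,f}(f,\Delta_p f)(x,y)$. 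The entire left-hand side is thereby governed by $\Gamma_p f(x_1,y)$ and $\Delta_p f(x_1,y)$, which I would evaluate explicitly.

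Finally I would subtract. By design the $\deg_{G_1}(x_1)$-dependent terms of $\Gamma_{2,p}f(x,y)$ coincide with those of $\Gamma_{2,p}(f_y)(x)$ and cancel, while the $\deg_{G_2}(y)$-terms were already annihilated by the flattening; what survives is only the spike contribution
\[
\Gamma_{2,p}(f)(x,y)-\Gamma_{2,p}(f^x)(y)-\Gamma_{2,p}(f_y)(x)
=\frac{1}{2p}|b|^{p-2}|c-b|^{p}-\frac{1}{2(p-1)}|b|^{p-2}b\,|c-b|^{p-2}(c-b).
\]
Normalizing $b=1$ and writing $s=c-1\ge 0$, this difference equals $g(s)=\frac{1}{2p}s^{p}-\frac{1}{2(p-1)}s^{p-1}$, the very one-variable function of \eqref{eq:function_g}. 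By \eqref{eq:g_y_minimum} its minimum is $g(1)=-\frac{1}{2p(p-1)}<0$, attained at $s=1$, i.e.\ $c=2$. Hence for this $f$ the difference is strictly negative and \eqref{eq:cartesian_product} holds, which is what we wanted.

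The main obstacle is precisely the bookkeeping forced by the extra neighbors of the first-sphere vertex $(x_1,y)$: the naive "one edge plus a spike" function fails the moment $\deg_{G_2}(y)\ge 2$, because the drops from $b$ back to $0$ at the vertices $(x_1,y_k)$ inject a positive, degree-proportional amount into $\Gamma_p f(x_1,y)$ that inflates the left-hand side and overwhelms the spike. Recognizing that flattening $f$ along the $x_1$-slice removes exactly those terms, so that the leftover difference is the same scalar function $g$ whose negativity was already established for leaves and paths, is the key idea, and verifying this cancellation carefully is the one genuinely delicate computation.
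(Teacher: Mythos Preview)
Your construction is correct and lands on essentially the same counterexample as the paper: with $b=1$ and $c=2$ your active differences are $A_1=1$, $B_1=0$, $C_{11}=c-b=1$, $D_{11}=c=2$, which are precisely the values the paper plugs in, and the resulting deficit $g(1)=\frac{1}{2p}-\frac{1}{2(p-1)}=-\frac{1}{2p(p-1)}$ matches. The only structural difference is that the paper first derives a general closed formula for $\Gamma_{2,p}(f)(x,y)-\Gamma_{2,p}(f^x)(y)-\Gamma_{2,p}(f_y)(x)$ in terms of all the increments $A_i,B_k,C_{ik},D_{ik}$ and then sets \emph{every} $A_i=1$, $C_{ik}=1$, $D_{ik}=2$, whereas you localize the spike to a single pair $(x_1,y_1)$, flatten along the $x_1$-slice, and compute both sides directly; your route trades the general decomposition for a more hands-on cancellation argument but reaches the identical conclusion.
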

\begin{proof}
   On the Cartesian product $G_1\times G_2$ for two graphs $G_1=(V_1,E_1)$ and $G_2=(V_2,E_2)$, for any function $f:V_1\times V_2 \to \mathbb{R}$ and any vertex $(x,y)\in V_1\times V_2$, we first have the additivity property for both $\Delta_{p}(f)$ and $\Gamma_{p}(f)$:
   \begin{align}
       \Delta_{p}f(x,y)=&\sum_{(u,v)\sim(x,y)}|f(u,v)-f(x,y)|^{p-2}(f(u,v)-f(x,y))\notag\\
       =&\sum_{x_i\sim x}|f(x_i,y)-f(x,y)|^{p-2}(f(x_i,y)-f(x,y))\notag\\
       &+\sum_{y_k\sim y}|f(x,y_k)-f(x,y)|^{p-2}(f(x,y_k)-f(x,y))\notag\\
       =&\Delta_{p}(f_y)(x)+\Delta_p(f^x)(y)\label{eq:cartesian_product_laplacian}
   \end{align}
   and
   \begin{align}
       \Gamma_p(f)(x,y)=&\frac{p-1}{2}\sum_{(u,v)\sim (x,y)}|f(u,v)-f(x,y)|^p\notag\\
       =&\frac{p-1}{2}\sum_{x_i\sim x}|f(x_i,y)-f(x,y)|^p+\frac{p-1}{2}\sum_{y_k\sim y}|f(x,y_k)-f(x,y)|^p\notag\\
       =&\Gamma_p(f_y)(x)+\Gamma_p(f^x)(x).\label{eq:cartesian_product_gammap}
   \end{align}
   Then here comes $\Gamma_{2,p}(f)(x,y)$ by definition:
   \begin{align}
       \Gamma_{2,p}(f)(x,y)=&\frac{1}{p(p-1)}\sum_{(u,v)\sim(x,y)}|f(u,v)-f(x,y)|^{p-2}(\Gamma_p(f)(u,v)-\Gamma_P(f)(x,y))\notag\\
       &-\frac{1}{(p-1)^2}\Gamma_{p,f}(f,\Delta_p f)(x,y)\label{eq:cartesian_product_gamma2}
   \end{align}
   We calculate each term above carefully. Respectively denote $T_1$ and $T_2$ as follows:
   \begin{align}
       T_1+T_2:=&\frac{1}{p(p-1)}\sum_{x_i\sim x}|f(x_i,y)-f(x,y)|^{p-2}(\Gamma_p(f)(x_i,y)-\Gamma_p(f)(x,y))\\
       &+\frac{1}{p(p-1)}\sum_{y_k\sim y}|f(x,y_k)-f(x,y)|^{p-2}(\Gamma_p(f)(x,y_k)-\Gamma_p(f)(x,y)).
   \end{align}
   According to the additivity of $\Gamma_{p}(f)(x,y)$, i.e. (\ref{eq:cartesian_product_gammap}), we have 
   \begin{align}
       T_1=&\frac{1}{p(p-1)}\sum_{x_i\sim x}|f(x_i,y)-f(x,y)|^{p-2}(\Gamma_p(f_y)(x_i)+\Gamma_p(f^{x_i})(y)-\Gamma_p(f^x)(y)-\Gamma_p(f_y)(x))\notag\\
       =&:L_1+\frac{1}{p(p-1)}\sum_{x_i \sim x}|f(x_i,y)-f(x,y)|^{p-2}(\Gamma_p(f^{x_i})(y)-\Gamma_p(f^x)(y))\notag\\
       =&L_1+\frac{1}{2p}\sum_{\substack{x_i\sim x\\y_k \sim y}}|f(x_i,y)-f(x,y)|^{p-2}(|f(x_i,y_k)-f(x_i,y)|^p-|f(x,y_k)-f(x,y)|^p),\label{eq:T1-L1}
   \end{align}
   here $L_1$ is precisely one of the terms which appears in $\Gamma_{2,p}(f_y)(x)$.
   
   Similarly, we compute the term $T_2$ and denote $L_2$ accordingly:
   \begin{align}
       T_2=&\frac{1}{p(p-1)}\sum_{y_k\sim y}|f(x,y_k)-f(x,y)|^{p-2}(\Gamma_p(f^x)(y_k)+\Gamma_p(f_{y_k})(x)-\Gamma_p(f^x(y))-\Gamma_p(f_y)(x))\notag\\
       =&:L_2+\sum_{y_k \sim y}|f(x,y_k)-f(x,y)|^{p-2}(\Gamma_p(f_{y_k})(x)-\Gamma_p(f_{y})(x))\notag\\
       =&L_2+\frac{1}{2p}\sum_{\substack{x_i\sim x\\y_k\sim x}}|f(x,y_k)-f(x,y)|^{p-2}(|f(x_i,y_k)-f(x,y_k)|^p-|f(x_i,y)-f(x,y)|^p).\label{eq:T2-L2}
   \end{align}
   For the last term in (\ref{eq:cartesian_product_gamma2}), we have
   \begin{align}
       &\frac{1}{(p-1)^2}\Gamma_{p,f}(f,\Delta_p f)(x,y)\notag\\
       =&\frac{1}{2(p-1)}\left(\sum_{x_i \sim x}|f(x_i,y)-f(x,y)|^{p-2}(f(x_i,y)-f(x,y))(\Delta_p f(x_i,y)-\Delta_p f(x,y))\right)\\
       &+\frac{1}{2(p-1)}\left(\sum_{y_k\sim y}|f(x,y_k)-f(x,y)|^{p-2}(f(x,y_k)-f(x,y))(\Delta_p f(x,y_k)-\Delta_p f(x,y))\right)\\
       =&:T_3+T_4.\notag
   \end{align}
   For $T_3$, by the additivity of $\Delta_p$, (see (\ref{eq:cartesian_product_laplacian})), we get
   \begin{align}
       T_3=&\frac{1}{2(p-1)}\sum_{x_i \sim x}|f(x_i,y)-f(x,y)|^{p-2}(f(x_i,y)-f(x,y))(\Delta_p (f^{x_i})(y)+\Delta_p(f_y)(x_i)\notag\\
       & -\Delta_p(f^x)(y)-\Delta_p(f_y)(x))\notag\\
       =&:L_3+\frac{1}{2(p-1)}\sum_{x_i \sim x}|f(x_i,y)-f(x,y)|^{p-2}(f(x_i,y)-f(x,y))(\Delta_p(f^{x_i})(y)-\Delta_p(f^x)(y))\notag\\
       =&L_3+\frac{1}{2(p-1)}\sum_{\substack{x_i\sim x\\y_k\sim y}}|f(x_i,y)-f(x,y)|^{p-2}(f(x_i,y)-f(x,y))(|f(x_i,y_k)-f(x_i,y)|^{p-2}\notag\\
       &(f(x_i,y_k)-f(x_i,y))-|f(x,y_k)-f(x,y)|^{p-2}(f(x,y_k)-f(x,y))).
   \end{align}
   Here $L_3$ is precisely one of the terms appearing in $\Gamma_{2,p}(f_y)(x)$.
   Similarly, we have for the term $T_4$:
   \begin{align}
       T_4=&:L_4+\frac{1}{2(p-1)}\sum_{\substack{x_i \sim x\\y_k \sim y}}|f(x,y_k)-f(x,y)|^{p-2}(f(x,y_k)-f(x,y))(|f(x_i,y_k)-f(x,y_k)|^{p-2}\notag\\
       &(f(x_i,y_k)-f(x,y_k))-|f(x_i,y)-f(x,y)|^{p-2}(f(x_i,y)-f(x,y)))
   \end{align}
   For simplicity, we again introduce several notations for these differences.
   \begin{align*}
   A_i:&=f(x_i,y)-f(x,y),\,\,B_k:=f(x,y_k)-f(x,y),\\
   C_{ik}:&=f(x_i,y_k)-f(x_i,y),\,\,D_{ik}:=f(x_i,y_k)-f(x,y_k).
   \end{align*}
   Notice that these four variables are not totally independent with each other, even $f$ can be arbitrarily chosen. The relation between them is 
   \begin{equation}\label{eq:relationABCD}
       C_{ik}-B_{k}=D_{ik}-A_i,\,\,\text{ for all }1\leq i \leq |S_1(x)| \text{ and }1\leq k \leq |S_1(y)|.
   \end{equation}
   Then we compute the difference between $\Gamma_{2,p}(f)(x,y)$ and $\Gamma_{2,p}(f^x)(y)$ and $\Gamma_{2,p}(f_y)(x)$ as follows:
   \begin{align}
       &\Gamma_{2,p}(f)(x,y)-\Gamma_{2,p}(f^x)(y)-\Gamma_{2,p}(f_y)(x)\label{eq:difference_gamma2}\\
       &=(T_1+T_2-T_3-T_4)-(L_1-L_3)-(L_2-L_4)\notag\\
       &=(T_1-L_1)+(T_2-L_2)-((T_3-L_3)+(T_4-L_4))\notag\\
       &=\sum_{\substack{x_i \sim x\\y_k \sim y}}\left(\frac{1}{2p}|A_i|^{p-2}|C_{ik}|^p-\frac{1}{2p}|A_i|^{p-2}|B_k|^p+\frac{1}{2p}|B_k|^{p-2}|D_{ik}|^p-\frac{1}{2p}|A_{i}|^p|B_{k}|^{p-2}\right)\notag\\
       &-\sum_{\substack{x_i\sim x\\y_k\sim y}}\left(\frac{1}{2(p-1)}|A_i|^{p-2}A_i|C_{ik}|^{p-2}C_{ik}-\frac{1}{2(p-1)}|A_i|^{p-2}A_i|B_k|^{p-2}B_k\right)\notag\\
       &-\sum_{\substack{x_i \sim x\\y_k \sim y}}\left(\frac{1}{2(p-1)}|B_k|^{p-2}B_k|D_{ik}|^{p-2}D_{ik}-\frac{1}{2(p-1)}|A_i|^{p-2}A_i|B_{k}|^{p-2}B_k\right).
   \end{align}
   If we choose $A_i=1,B_k=0,C_{ik}=1$ and $D_{ik}=2$ for each $i$ and each $k$, then they satisfies the above equation (\ref{eq:relationABCD}) and the difference (\ref{eq:difference_gamma2}) is $\sum_{\substack{x_i \sim x\\y_k\sim y}}\frac{1}{2p}-\frac{1}{2(p-1)}=\sum_{\substack{x_i \sim x\\y_k\sim y}}-\frac{1}{2p(p-1)}<0$, then the above inequality (\ref{eq:cartesian_product}) holds.
\end{proof}
\begin{remark}
    In the case of $p=2$, in which operators come back to the classic Laplacian $\Delta$ and gamma-calculus: $\Gamma$
 and $\Gamma_2$, the difference (\ref{eq:difference_gamma2}) becomes
 \begin{align}
     &\Gamma_{2}(f)(x,y)-\Gamma_2(f^x)(y)-\Gamma_2(f_y)(x)\notag\\
     &=\sum_{\substack{x_i \sim x\\y_k \sim y}}\left(\frac{1}{4}|C_{ik}|^2-\frac{1}{4}|B_k|^2+\frac{1}{4}|D_{ik}|^2-\frac{1}{4}|A_i|^2-\frac{1}{2}A_iC_{ik}+\frac{1}{2}A_iB_k-\frac{1}{2}B_kD_{ik}+\frac{1}{2}A_iB_{k}\right)\notag\\
     &=\sum_{\substack{x_i \sim x\\y_k \sim y}}\frac{1}{4}(|C_{ik}|^2-|B_k|^2+|D_{ik}|^2-|A_{i}|^2-2A_i(C_{ik}-B_k)-2B_k(D_{ik}-A_i))\notag\\
     &=\sum_{\substack{x_i \sim x\\y_k \sim y}}\frac{1}{4}(|C_{ik}|^2-|B_k|^2+|D_{ik}|^2-|A_i|^2-2A_i(D_{ik}-A_{i})-2B_k(C_{ik}-B_{k}))\notag\\
     &=\sum_{\substack{x_i \sim x\\y_k \sim y}}\frac{1}{4}((A_i-D_{ik})^2+(C_{ik}-B_{k})^2)\geq 0.
 \end{align}
 Here in the third equality, we apply the equality (\ref{eq:relationABCD}) to interchange the variables twice. Hence, we have the following inequality:
 \begin{equation}
     \Gamma_2(f)(x,y)\geq \Gamma_2(f^x)(y)+\Gamma_2(f_y)(x),
 \end{equation}
  which is proved in \cite[Lemma 2.5]{LP18}. Therefore, in this case, it can be derived a Bakry-\'Emery curvature condition on Cartesian product of two graphs $G_1=(V_1,E_1)$ and $G_2=(V_2,E_2)$ as follows:
 $$\K_{G_1\times G_2}(x,y)\geq \min\{\K_{G_1}(x),\K_{G_2}(y)\}.$$
 \end{remark}


\section*{Acknowledgement}
The author is grateful to the hospitality of Durham University. The author is also very grateful to Professor Norbert Peyerimhoff for making me aware of this topic and valuable discussions and to Professor Shiping Liu for his helpful comments. This work is supported by the "Outstanding PhD Students Overseas Study Program of the University of Science and Technology of China".

\end{document}